\documentclass[final,onefignum,onetabnum]{siamart250211}

\usepackage{amsmath,amsfonts,amsopn,amssymb}
\usepackage{graphicx}
\usepackage[caption=false]{subfig}
\graphicspath{ {./Figures/} }
\usepackage{multirow,relsize,makecell}
\usepackage{url}
\ifpdf
\DeclareGraphicsExtensions{.eps,.pdf,.png,.jpg}
\else
\DeclareGraphicsExtensions{.eps}
\fi
\usepackage{algorithm, algorithmic}
\usepackage[shortlabels]{enumitem}


\numberwithin{theorem}{section}
\newsiamremark{assumption}{Assumption}
\newsiamremark{remark}{Remark}
\newsiamremark{question}{Question}
\newsiamremark{example}{Example}
\crefname{assumption}{Assumption}{Assumptions}
\crefname{remark}{Remark}{Remarks}
\crefname{example}{Example}{Examples}

\headers{High-order augmented Lagrangian}{Young-Ju Lee and Jongho Park}

\title{A high-order augmented Lagrangian method\\ with arbitrarily fast convergence\thanks{Submitted to arXiv.
\funding{Young-Ju Lee's work was supported by NSF-DMS 2208499.}
}}

\author{
Young-Ju Lee\thanks{Department of Mathematics, Texas State University, San Marcos, TX 78666, USA
  (\email{yjlee@txstate.edu}).}
\and
Jongho Park\thanks{Applied Mathematics and Computational Sciences Program, Computer, Electrical and Mathematical Science and Engineering Division, King Abdullah University of Science and Technology~(KAUST), Thuwal 23955, Saudi Arabia
 (\email{jongho.park@kaust.edu.sa}).}
}

\ifpdf
\hypersetup{
  pdftitle={A high-order augmented Lagrangian method with arbitrarily fast convergence},
  pdfauthor={Young-Ju Lee and Jongho Park}
}
\fi

\usepackage[normalem]{ulem}
\usepackage{color}

\begin{document}

\maketitle
\begin{abstract}
We propose a high-order version of the augmented Lagrangian method for solving convex optimization problems with linear constraints, which achieves arbitrarily fast---and even superlinear---convergence rates.
First, we analyze the convergence rates of the high-order proximal point method under certain uniform convexity assumptions on the energy functional.
We then introduce the high-order augmented Lagrangian method and analyze its convergence by leveraging the convergence results of the high-order proximal point method.
Finally, we present applications of the high-order augmented Lagrangian method to various problems arising in the sciences, including data fitting, flow in porous media, and scientific machine learning.
\end{abstract}

\begin{keywords}
high-order augmented Lagrangian method, high-order proximal point method, convergence rate, convex optimization
\end{keywords}

\begin{AMS}
90C25,  
90C46  
\end{AMS}

\section{Introduction}
\label{Sec:Introduction}
This paper is concerned with a novel augmented Lagrangian method, referred to as the high-order augmented Lagrangian method, for solving constrained convex optimization problems.
A key feature of the proposed method is that it can achieve arbitrarily fast convergence, including superlinear convergence.
Let $V$ and $W$ be finite-dimensional vector spaces.
As a model problem, we consider the following convex optimization problem with a linear constraint:
\begin{equation}
\label{model_ALM}
\min_{v \in V} F(v)
\quad \text{subject to} \quad Bv = g,
\end{equation}
where $B \colon V \rightarrow W$ is a linear operator, $F \colon V \rightarrow \mathbb{R}$ is a convex functional, and $g \in W$.
Let $u \in V$ denote a solution of~\eqref{model_ALM}.

The classical augmented Lagrangian method, first proposed in~\cite{Hestenes:1969,Powell:1969}, for solving~\eqref{model_ALM} is given as follows:
\begin{equation}
\label{ALM_intro}
\begin{aligned}
    u^{(n+1)} &= \operatornamewithlimits{\arg\min}_{v \in V} \left\{ F(v) + ( \lambda^{(n)}, Bv - g) + \frac{1}{2 \epsilon} \| Bv - g \|^2 \right\}, \\
    \lambda^{(n+1)} &= \lambda^{(n)} + \epsilon^{-1} (B u^{(n+1)} - g), 
\end{aligned}
\quad n \geq 0,
\end{equation}
where $\epsilon$ is a positive penalty parameter.
The augmented Lagrangian method~\eqref{ALM_intro} is one of the most successful and widely used approaches for solving constrained optimization problems.
Despite its fundamental nature and its inclusion in standard textbooks on mathematical optimization (see, e.g.,~\cite{Bertsekas:1999,BBF:2013}), active research on the augmented Lagrangian method continues, aiming to broaden its applicability to a wider class of problems and to deepen the underlying theoretical understanding.

For example, in~\cite{KS:2019,KSW:2018}, extensions of the augmented Lagrangian method to Banach space settings and applications to quasi-variational inequalities were studied.
In~\cite{Rockafellar:2023}, a very general framework for the augmented Lagrangian method was developed, covering both convex and nonconvex problems as well as equality and inequality constraints.
From a theoretical perspective, a key observation made in~\cite{Rockafellar:1976b} is the duality between the augmented Lagrangian method and the proximal point method~\cite{Rockafellar:1976a}, which provides valuable insight into the structure of the augmented Lagrangian method and leads to elegant analyses.
A modern presentation of this duality can be found in~\cite[Appendix~A]{PH:2025}, where the relationship is further extended to connect with gradient descent methods in~\cite{Park:DD29}.
Related duality relationships among various convex optimization algorithms are discussed in a unified framework in~\cite{JPX:2025}.

In this paper, we propose a high-order version of the augmented Lagrangian method for solving problems of the form~\eqref{model_ALM}.
The starting point of our work is the high-order proximal point method proposed in~\cite{AN:2024,Nesterov:2021,Nesterov:2023}, which is a high-order variant of the classical proximal point method~\cite{Martinet:1970,Rockafellar:1976a} and enjoys faster convergence than its classical counterpart.
Following the idea of dualization introduced in~\cite{JPX:2025}, we construct the proposed method from the high-order proximal point method by extending the duality relationship between the augmented Lagrangian method and the proximal point method~\cite{PH:2025,Rockafellar:1976b} to the high-order setting.
As a result, the convergence analysis of the high-order augmented Lagrangian method follows directly from that of the high-order proximal point method.

In particular, we analyze the convergence rate of the high-order proximal point method when the energy functional is uniformly convex~\cite{Park:2022b}, a condition that is satisfied in many important applications, including those arising from partial differential equations~(PDEs).
Under this assumption, we show that the high-order proximal point method achieves linear or even superlinear convergence when its order exceeds the level of uniform convexity.
By invoking the aforementioned duality~\cite{JPX:2025}, these results can be transferred to the proposed high-order augmented Lagrangian method, thereby guaranteeing arbitrarily fast convergence rates.

We also address computational aspects of the high-order augmented Lagrangian method.
More precisely, we discuss how to handle potential numerical instabilities that may arise, especially in high-order settings.
In addition, we consider the design of efficient iterative solvers for the primal subproblems of the proposed method, which are nearly semicoercive due to the presence of the penalty term.
This is achieved by leveraging recent developments~\cite{LP:2025a} in robust subspace correction methods~\cite{TX:2002,Xu:1992} for nearly semicoercive convex optimization problems.
We note that similar ideas for designing efficient iterative solvers for the primal subproblems of the augmented Lagrangian method were previously considered in~\cite{LWXZ:2007} and subsequently applied to several important applications~\cite{LWC:2009,WZ:2014}.
Our discussion extends these approaches from linear problems to a much broader class of convex optimization problems.

The rest of this paper is organized as follows.
In \cref{Sec:Preliminaries}, we introduce the notation and present preliminary material.
In \cref{Sec:PPM}, we analyze the convergence rates of the high-order proximal point method under certain uniform convexity assumptions on the energy functional.
In \cref{Sec:ALM}, we introduce the high-order augmented Lagrangian method and analyze its convergence by leveraging the high-order proximal point method.
In \cref{Sec:Computation}, we discuss computational aspects of the high-order augmented Lagrangian methods.
In \cref{Sec:Applications}, we present several applications of the high-order augmented Lagrangian methods.
Finally, in \cref{Sec:Conclusion}, we conclude the paper with remarks.

\section{Preliminaries}
\label{Sec:Preliminaries}
Throughout this paper, let $V$ and $W$ be finite-dimensional vector spaces equipped with a norm $\| \cdot \|$ and an inner product $( \cdot, \cdot)$.
The norm $\| \cdot \|$ need not be the norm induced by the inner product $( \cdot , \cdot)$.
The dual norm $\| \cdot \|_*$ is defined by
\begin{equation*}
\| v \|_* = \sup_{\| w \| \leq 1} (v, w),
\quad v \in V.
\end{equation*}

Given $p > 1$, let $p^*$ denote its H\"{o}lder conjugate, i.e.,
\begin{equation*}
\frac{1}{p} + \frac{1}{p^*} = 1.
\end{equation*}
Suppose that the norm power $\| \cdot \|^p$ is differentiable.
Let $J_p \colon V \to V$ denote the duality map~\cite{XR:1991} of power type $p$, defined by
\begin{equation}
\label{J_p}
    J_p (v) = \nabla \left( \frac{1}{p} \| \cdot \|^p \right) (v) = \| v \|^{p-1} j(v), \quad v \in V,
\end{equation}
where $j(v) \in V$ is defined to satisfy
\begin{equation}
\label{j}
    ( j(v), v) = \| v \|, \quad \| j(v) \|_* = 1.
\end{equation}
The existence of $j(v)$ follows directly from the Hahn--Banach theorem.
A useful inequality associated with the duality map is the following generalized Cauchy--Schwarz inequality, which can be proved easily using~\eqref{J_p} and~\eqref{j}:
\begin{equation}
\label{generalized_CS}
( J_p (v), w)
\leq \| v \|^{p-1} \| w \|,
\quad v, w \in V.
\end{equation}
In addition, one can verify that the inverse of $J_p$ is given by the mapping $J_{p^*}^* \colon V \to V$, defined as
\begin{equation}
\label{J_p_star}
    J_{p^*}^* (v) = \nabla \left( \frac{1}{p^*} \| \cdot \|_*^{p^*} \right) (v) = \| v \|_*^{p-1} j_* (v),
    \quad v \in V,
\end{equation}
where $j_* (v) \in V$ is defined analogously to~\eqref{j}, namely,
\begin{equation*}
    (j_* (v), v) = \| v \|_*, \quad
    \| j_* (v) \| = 1.
\end{equation*}

For a differentiable functional $F \colon V \to \mathbb{R}$, we denote by $\nabla F$ the gradient with respect to the inner product $( \cdot , \cdot)$.
For a differentiable and convex functional $F \colon V \to \mathbb{R}$, $D_F (v,w)$ denotes the Bregman distance associated with $F$ from $w$ to $v$:
\begin{equation*}
    D_F (v, w) = F (v) - F(w) - ( \nabla F(w), v-w),
    \quad v, w \in V.
\end{equation*}
We also define the symmetrized Bregman distance~\cite{PH:2025} by
\begin{equation}
\label{Bregman_sym}
    D_F^{\mathrm{sym}} (v,w)
    = D_F (v,w) + D_F(w,v)
    = (\nabla F(v) - \nabla F(w), v - w),
    \quad v, w \in V.
\end{equation}

A differentiable and convex functional $F \colon V \to \mathbb{R}$ is said to be $(p,\mu)$-uniformly convex for some $p \geq 2$ and $\mu > 0$ if
\begin{equation*}
    D_F (v, w) \geq \frac{\mu}{p} \| v - w \|^p,
    \quad v, w \in V.
\end{equation*}
When the constant $\mu$ is not essential, we simply say that $F$ is $p$-uniformly convex, or simply uniformly convex.
Similarly, $F$ is said to be $(q,L)$-weakly smooth for some $1 < q \leq 2$ and $L > 0$ if
\begin{equation*}
    D_F (v, w) \leq \frac{L}{q} \| v - w \|^q,
    \quad v, w \in V.
\end{equation*}
When the constant $L$ is omitted, we simply say that $F$ is $q$-weakly smooth.
See~\cite{Park:2022b} for further discussion of uniform convexity and weak smoothness.

For a convex functional $F \colon V \to \mathbb{R}$, we define its Legendre--Fenchel conjugate $F^* \colon V \to \mathbb{R}$ by
\begin{equation*}
    F^* (\xi) = \sup_{v \in V} \left\{ (\xi, v) - F (v) \right\},
    \quad \xi \in V.
\end{equation*}
An important property of the Legendre--Fenchel conjugate is that it reverses order: if $F \leq G$, then $F^* \geq G^*$; see, e.g.,~\cite{Rockafellar:1970}.

Using the Legendre--Fenchel conjugate, one can establish the following useful relationship between uniform convexity and weak smoothness.
Although this result is well known (see, e.g.,~\cite{Zalinescu:2002}), we include a proof for completeness.

\begin{proposition}
\label{Prop:smooth}
Let $F \colon V \to \mathbb{R}$ be a uniformly convex and $(q, L)$-weakly smooth functional for some $1 < q \leq 2$ and $L > 0$.
Then its Legendre--Fenchel conjugate $F^*$ is $(q^*, L^{-(q^*-1)})$-uniformly convex with respect to the dual norm $\| \cdot \|_*$.
\end{proposition}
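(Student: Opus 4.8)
The plan is to exploit the order-reversing property of the Legendre--Fenchel conjugate together with an explicit ``model'' functional whose conjugate we can compute directly. Recall that $(q,L)$-weak smoothness of $F$ means $D_F(v,w) \leq \frac{L}{q}\|v-w\|^q$ for all $v,w$. The key is that this is equivalent to a bound on $F$ of the form $F(v) \leq F(w) + (\nabla F(w), v-w) + \frac{L}{q}\|v-w\|^q$, i.e.\ $F$ lies below a shifted copy of the model functional $\phi(v) = \frac{L}{q}\|v\|^q$ (up to an affine term). Applying conjugation to such an inequality and using that conjugation reverses order should yield the desired uniform convexity estimate on $F^*$.

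More concretely, the steps I would carry out are the following. First, fix $w \in V$ and set $\xi = \nabla F(w)$, so that by weak smoothness $F(v) \leq \ell_\xi(v) + \frac{L}{q}\|v-w\|^q$ for all $v$, where $\ell_\xi(v) = F(w) + (\xi, v-w)$ is the supporting affine functional. Taking conjugates and reversing the inequality gives a lower bound on $F^*$ near $\xi$: specifically $F^*(\eta) \geq$ (conjugate of the right-hand side)$(\eta)$, and the conjugate of $v \mapsto \ell_\xi(v) + \frac{L}{q}\|v-w\|^q$ can be computed explicitly using~\eqref{J_p_star} — it is an affine term in $\eta$ plus $\frac{1}{q^*}\big(\frac{1}{L}\big)^{q^*-1}\|\eta - \xi\|_*^{q^*}$, since the conjugate of $\frac{L}{q}\|\cdot\|^q$ is $\frac{1}{q^*} L^{-(q^*-1)}\|\cdot\|_*^{q^*}$ (a standard computation from the definition of the duality map of power type). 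Second, I would recognize that $\xi = \nabla F(w)$ ranges over the image of $\nabla F$, and that $w \in \partial F^*(\xi)$, which is exactly what is needed to turn the pointwise lower bound into the Bregman-distance inequality $D_{F^*}(\eta, \xi) \geq \frac{1}{q^*} L^{-(q^*-1)}\|\eta-\xi\|_*^{q^*}$. Here the hypothesis that $F$ is also uniformly convex is what guarantees $\nabla F$ is onto (or at least that $F^*$ is finite and differentiable, with $\nabla F^*$ the inverse of $\nabla F$), so that the estimate holds for all pairs in $V \times V$ rather than merely on a subset.

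The main obstacle is the second step: passing from the pointwise inequality ``$F^*(\eta) \geq$ affine $+ \frac{1}{q^*}L^{-(q^*-1)}\|\eta-\xi\|_*^{q^*}$ for $\xi$ in the range of $\nabla F$'' to the full Bregman-distance formulation of $(q^*, L^{-(q^*-1)})$-uniform convexity of $F^*$. This requires knowing that every $\xi \in V$ arises as $\nabla F(w)$ for some $w$ (surjectivity of $\nabla F$), that the affine term is precisely the supporting functional of $F^*$ at $\xi$ — i.e.\ that the slope is $w = \nabla F^*(\xi)$ — and that $F^*$ is differentiable so that $D_{F^*}$ is well-defined in the stated form. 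All three follow from the combination of uniform convexity and weak smoothness of $F$: uniform convexity forces $\nabla F$ to be a bijection onto $V$ with $(\nabla F)^{-1} = \nabla F^*$, and differentiability of $F^*$ is inherited from strict convexity of $F$. Once these structural facts are in place, the computation of the conjugate of the power of the norm via~\eqref{J_p_star} is routine, and the order-reversal of conjugation delivers the result immediately.
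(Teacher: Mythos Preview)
Your proposal is correct and follows essentially the same route as the paper: both arguments take the weak-smoothness upper bound on $F$, pass to Legendre--Fenchel conjugates (using the explicit conjugate of $\frac{L}{q}\|\cdot\|^q$ and the order-reversing property), and then invoke surjectivity of $\nabla F$---guaranteed by the combined uniform convexity and weak smoothness hypotheses---to extend the resulting Bregman-distance lower bound on $F^*$ to all of $V\times V$. The paper simply compresses your explicit computation into the single line ``taking the Legendre--Fenchel conjugate with respect to $w$ on both sides,'' but the substance is identical.
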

\begin{proof}
Since $F$ is $(q, L)$-weakly smooth, we have
\begin{equation*}
    D_F(v + w, v) \leq \frac{L}{q} \| w \|^q,
    \quad v, w \in V.
\end{equation*}
Taking the Legendre--Fenchel conjugate with respect to $w$ on both sides, we obtain
\begin{equation*}
    D_{F^*} ( \xi + \nabla F(v), \nabla F(v) ) \geq \frac{1}{q^* L^{q^* - 1}} \| \xi \|_*^{q^*},
    \quad v, \xi \in V.
\end{equation*}
Since $F$ is uniformly convex and weakly smooth, the gradient mapping $\nabla F$ is surjective~\cite{Teboulle:2018}.
Therefore, we conclude that
\begin{equation*}
    D_{F^*} ( \xi + \eta, \eta ) \geq \frac{1}{q^* L^{q^* - 1}} \| \xi \|_*^{q^*},
    \quad \xi, \eta \in V.
\end{equation*}
This completes the proof.
\end{proof}

\begin{remark}
\label{Rem:local_conditions}
In some applications, uniform convexity and weak smoothness hold only locally~\cite{GR:2008}.
That is, the constants $L$ and $\mu$ may depend on $v$ and $w$ (see, e.g.,~\cite{BL:1994}).
However, it is often the case that $L$ and $\mu$ are uniformly bounded on bounded sets, and our analysis in the sequel only requires uniform convexity and weak smoothness on such sets.
Therefore, for simplicity, we state our results without explicitly indicating this local dependence, which leads to a clearer presentation.
A similar discussion can be found in~\cite[Remark~2.1]{TX:2002}.
On the other hand, a complete treatment of this local dependence is provided in, for example,~\cite{LP:2025b}.
\end{remark}

\section{High-order proximal point method}
\label{Sec:PPM}
In this section, we study the convergence behavior of the high-order proximal point method, introduced in~\cite{AN:2024,Nesterov:2021,Nesterov:2023}, when the objective functional is differentiable and uniformly convex.
In particular, under the additional assumptions of differentiability and uniform convexity, we derive improved convergence rates for the high-order proximal point method compared to existing results.

We consider the following model problem:
\begin{equation}
\label{model_PPM}
\min_{v \in V} F(v),
\end{equation}
where $F \colon V \to \mathbb{R}$ is a differentiable and convex functional.
We assume that~\eqref{model_PPM} admits a solution, denoted by $u \in V$.

The high-order proximal point method (see, e.g.,~\cite{AN:2024}) for solving~\eqref{model_PPM} is described in \cref{Alg:PPM}.

\begin{algorithm}
\caption{High-order proximal point method for solving~\eqref{model_PPM}}
\begin{algorithmic}[]
\label{Alg:PPM}
\STATE Given $r > 1$ and $\epsilon > 0$:
\STATE Choose $u^{(0)} \in V$.
\FOR{$n=0,1,2,\dots$}
    \STATE $\displaystyle
    u^{(n+1)} = \operatornamewithlimits{\arg\min}_{v \in V} \left\{ F (v) + \frac{\epsilon}{r} \| v - u^{(n)} \|^r \right\}
    $
\ENDFOR
\end{algorithmic}
\end{algorithm}

In \cref{Alg:PPM}, the first-order optimality condition for $u^{(n+1)}$ is written as
\begin{equation}
\label{PPM_optimality}
\nabla F (u^{(n+1)}) + \epsilon J_r ( u^{(n+1)}-u^{(n)} ) = 0,
\end{equation}
where $J_r$ was defined in~\eqref{J_p}.

When $r = 2$, \cref{Alg:PPM} reduces to the standard proximal point method~\cite{Martinet:1970,Rockafellar:1976a}.
The convergence analysis for this case has been extensively studied in the literature; see~\cite{Kim:2021} and the references therein.
On the other hand, for the case $r > 2$, the analysis in~\cite{Nesterov:2023} shows that larger values of $r$ yield faster convergence.

Here, we impose an additional $(p, \mu)$-uniform convexity assumption on $F$, which is typical in variational problems arising in nonlinear PDEs (see, e.g.,~\cite{LP:2025a,TX:2002}).

We first consider the case $r = p$.
In this setting, we prove that \cref{Alg:PPM} achieves linear convergence of the energy error, as stated in \cref{Thm:PPM_linear}.

\begin{theorem}
\label{Thm:PPM_linear}
In~\eqref{model_PPM}, suppose that $F$ is $(p, \mu)$-uniformly convex for some $p \geq 2$ and $\mu > 0$.
In \cref{Alg:PPM}, if $r = p$, then we have
\begin{equation}
\label{Thm1:PPM_linear}
    \frac{F(u^{(n+1)}) - F(u)}{F(u^{(n)}) - F(u)}
    \leq \frac{1}{1 + \gamma} \text{ with }
    \gamma = \frac{p}{p-1} \left( \frac{\mu}{\epsilon} \right)^{\frac{1}{p-1}} + \frac{1}{p-1} \left( \frac{\mu}{\epsilon} \right)^{\frac{p}{p-1}},
    \quad n \geq 0.
\end{equation}
Moreover, if $\epsilon < \mu/p$, then we have
\begin{equation}
\label{Thm2:PPM_linear}
    \frac{\| u^{(n+1)} - u \|}{\| u^{(n)} - u \|}
    \leq \frac{1}{\left( \frac{\mu}{p \epsilon} \right)^{\frac{1}{p-1}} - 1}, \quad n \geq 0.
\end{equation}
\end{theorem}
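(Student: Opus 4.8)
The plan is to monitor the energy error $e_n := F(u^{(n)}) - F(u) \ge 0$ together with the distance $\|u^{(n)} - u\|$, using the optimality condition \eqref{PPM_optimality} to replace $\nabla F(u^{(n+1)})$ by $-\epsilon J_p(u^{(n+1)} - u^{(n)})$ wherever it appears, and invoking $(p,\mu)$-uniform convexity of $F$ at two different pairs of points. Throughout I will use that $\nabla F(u) = 0$ (so $D_F(u^{(n+1)}, u) = e_{n+1}$) and the identity $(J_p(a), a) = \|a\|^p$ coming from \eqref{J_p}--\eqref{j}. First I would expand $D_F(u^{(n)}, u^{(n+1)})$, substitute the optimality condition, and obtain $D_F(u^{(n)}, u^{(n+1)}) = e_n - e_{n+1} - \epsilon \|u^{(n+1)} - u^{(n)}\|^p$; bounding the left-hand side below by $\frac{\mu}{p}\|u^{(n+1)} - u^{(n)}\|^p$ via uniform convexity yields the \emph{sharp descent inequality}
\begin{equation*}
    e_n - e_{n+1} \ge \Bigl( \epsilon + \frac{\mu}{p} \Bigr)\|u^{(n+1)} - u^{(n)}\|^p .
\end{equation*}
This improves on the elementary estimate $e_n - e_{n+1} \ge \frac{\epsilon}{p}\|u^{(n+1)} - u^{(n)}\|^p$ that one gets merely by comparing the subproblem objective at $u^{(n+1)}$ with its value at $u^{(n)}$, and recovering the extra $\mu/p$ is exactly what will generate the second term of $\gamma$.

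Next I would expand $D_F(u, u^{(n+1)})$, substitute the optimality condition to get $D_F(u, u^{(n+1)}) = -e_{n+1} + \epsilon (J_p(u^{(n+1)} - u^{(n)}), u - u^{(n+1)})$, bound it below by $\frac{\mu}{p}\|u - u^{(n+1)}\|^p$, and apply the generalized Cauchy--Schwarz inequality \eqref{generalized_CS} to obtain
\begin{equation*}
    \epsilon \|u^{(n+1)} - u^{(n)}\|^{p-1} \|u - u^{(n+1)}\| \ge e_{n+1} + \frac{\mu}{p}\|u - u^{(n+1)}\|^p .
\end{equation*}
Treating $t = \|u - u^{(n+1)}\| \ge 0$ as a free variable and maximizing $\epsilon \|u^{(n+1)} - u^{(n)}\|^{p-1} t - \frac{\mu}{p} t^p$ over $t$ (an elementary one-variable computation, with maximizer $t = (\epsilon/\mu)^{1/(p-1)}\|u^{(n+1)} - u^{(n)}\|$) gives $e_{n+1} \le \frac{p-1}{p}\epsilon (\epsilon/\mu)^{1/(p-1)}\|u^{(n+1)} - u^{(n)}\|^p$, i.e.
\begin{equation*}
    \|u^{(n+1)} - u^{(n)}\|^p \ge \frac{p}{p-1}\,\frac{1}{\epsilon}\Bigl( \frac{\mu}{\epsilon} \Bigr)^{\frac{1}{p-1}} e_{n+1} .
\end{equation*}
Substituting this into the descent inequality and simplifying (using $1 + \frac{1}{p-1} = \frac{p}{p-1}$ and $\frac{\mu}{\epsilon}\,(\mu/\epsilon)^{1/(p-1)} = (\mu/\epsilon)^{p/(p-1)}$) produces exactly $e_n - e_{n+1} \ge \gamma\, e_{n+1}$, which is \eqref{Thm1:PPM_linear}.

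For \eqref{Thm2:PPM_linear} I would discard the nonnegative term $e_{n+1}$ in the second displayed inequality above, obtaining $\frac{\mu}{p}\|u - u^{(n+1)}\|^p \le \epsilon \|u^{(n+1)} - u^{(n)}\|^{p-1}\|u - u^{(n+1)}\|$, hence $\|u - u^{(n+1)}\| \le (p\epsilon/\mu)^{1/(p-1)}\|u^{(n+1)} - u^{(n)}\|$. Then the triangle inequality $\|u^{(n+1)} - u^{(n)}\| \le \|u^{(n+1)} - u\| + \|u^{(n)} - u\|$ and a rearrangement give $\|u^{(n+1)} - u\| \le \frac{c}{1-c}\|u^{(n)} - u\|$ with $c = (p\epsilon/\mu)^{1/(p-1)}$; here the hypothesis $\epsilon < \mu/p$ is precisely what makes $c < 1$, and $\frac{c}{1-c} = \bigl( (\mu/(p\epsilon))^{1/(p-1)} - 1 \bigr)^{-1}$.

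The only mildly delicate point is the bookkeeping in the first two steps—being disciplined about which Bregman distance is evaluated at which ordered pair, and remembering that $D_F(u^{(n+1)}, u) = e_{n+1}$ because $u$ minimizes $F$—after which everything reduces to routine algebra and a single one-variable optimization. I do not anticipate a genuine obstacle.
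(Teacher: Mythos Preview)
Your proposal is correct and follows essentially the same route as the paper's proof: uniform convexity at the pair $(u^{(n)},u^{(n+1)})$ plus the optimality condition~\eqref{PPM_optimality} gives the sharp descent inequality, uniform convexity at $(u,u^{(n+1)})$ plus~\eqref{generalized_CS} and the one-variable maximization gives the upper bound on $e_{n+1}$, and combining the two yields~\eqref{Thm1:PPM_linear}; for~\eqref{Thm2:PPM_linear} both arguments reduce to $\frac{\mu}{p}\|u^{(n+1)}-u\|^{p-1}\le \epsilon\|u^{(n+1)}-u^{(n)}\|^{p-1}$ followed by the triangle inequality and rearrangement. The only cosmetic difference is that for~\eqref{Thm2:PPM_linear} the paper restarts from the monotone form $(\nabla F(u^{(n+1)})-\nabla F(u),\,u^{(n+1)}-u)\ge \frac{\mu}{p}\|u^{(n+1)}-u\|^p$, whereas you recycle your second displayed inequality by discarding $e_{n+1}$; the two are equivalent once one recalls $\nabla F(u)=0$ and $e_{n+1}\ge 0$.
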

\begin{proof}
By the $(p, \mu)$-uniform convexity of $F$, we have
\begin{equation}
\label{Thm3:PPM_linear}
\begin{split}
F (u^{(n)})
&\geq F (u^{(n+1)}) + ( \nabla F (u^{(n+1)}), u^{(n)} - u^{(n+1)} ) + \frac{\mu}{p} \|u^{(n)}-u^{(n+1)} \|^p \\
&\stackrel{\eqref{PPM_optimality}}{=} F (u^{(n+1)})+ \left( \epsilon+\frac{\mu}{p} \right) \|u^{(n+1)}-u^{(n)}\|^p.
\end{split}
\end{equation}
On the other hand, again by the $(p,\mu)$-uniform convexity of $F$, we get
\begin{equation}
\begin{split}
\label{Thm4:PPM_linear}
F(u^{(n+1)}) - F(u)
&\leq - (\nabla F (u^{(n+1)}), u - u^{(n+1)} ) - \frac{\mu}{p} \| u - u^{(n+1)} \|^p \\
&\stackrel{\eqref{PPM_optimality}}{=} \epsilon ( J_p(u^{(n+1)}-u^{(n)}), u - u^{(n+1)}) - \frac{\mu}{p} \| u - u^{(n+1)} \|^p \\
&\stackrel{\eqref{generalized_CS}}{\leq} \epsilon \| u^{(n+1)} - u^{(n)} \|^{p-1} \| u - u^{(n+1)} \| - \frac{\mu}{p} \| u - u^{(n+1)} \|^p \\
&\leq \sup_{t \geq 0}\ \left\{ \epsilon \|u^{(n+1)}-u^{(n)}\|^{p-1} t - \frac{\mu}{p} t^p \right\} \\
&= \frac{p-1}{p} \epsilon^{\frac{p}{p-1}} \mu^{-\frac{1}{p-1}} \|u^{(n+1)}-u^{(n)}\|^p.
\end{split}
\end{equation}
Combining~\eqref{Thm3:PPM_linear} and~\eqref{Thm4:PPM_linear} yields~\eqref{Thm1:PPM_linear}.

To prove~\eqref{Thm2:PPM_linear}, we observe that
\begin{equation*}
\begin{split}
\frac{\mu}{p} \| u^{(n+1)} - u \|^p
&\leq ( \nabla F (u^{(n+1)}) - \nabla F(u), u^{(n+1)} - u) \\
&\stackrel{\eqref{PPM_optimality}}{=} - \epsilon ( J_p(u^{(n+1)} - u^{(n)}),u^{(n+1)}-u ) \\
&\stackrel{\eqref{generalized_CS}}{\leq} \epsilon\|u^{(n+1)} - u^{(n)}\|^{p-1} \| u^{(n+1)} - u \| \\
&\leq \epsilon ( \| u^{(n+1)} - u \| + \| u^{(n)} - u \| )^{p-1} \| u^{(n+1)} - u \|.
\end{split}
\end{equation*}
It follows that
\begin{equation*}
    \| u^{(n+1)} - u \| \leq \left( \frac{p \epsilon}{\mu} \right)^{\frac{1}{p-1}} ( \| u^{(n+1)} - u \| + \| u^{(n)} - u \|),
\end{equation*}
which implies~\eqref{Thm2:PPM_linear}.
\end{proof}

In \cref{Thm:PPM_linear}, there is an important observation beyond the mere linear convergence: we note that $\gamma$ tends to $\infty$ as $\epsilon$ tends to $0$.
Namely, the linear convergence rate of \cref{Alg:PPM} becomes arbitrarily fast as $\epsilon \to 0$.
This property will play an important role in the sequel.

Next, we consider the case $r > p$.
In this setting, we obtain an even stronger property, namely superlinear convergence; see \cref{Thm:PPM_superlinear}.

\begin{theorem}
\label{Thm:PPM_superlinear}
In~\eqref{model_PPM}, suppose that $F$ is $(p, \mu)$-uniformly convex for some $p \geq 2$ and $\mu > 0$.
In \cref{Alg:PPM}, if $r > p$, then $F(u^{(n)})$ converges to $F(u)$ superlinearly and satisfies
\begin{equation}
\label{Thm1:PPM_superlinear}
    \frac{F(u^{(n+1)}) - F(u)}{( F(u^{(n)}) - F(u) )^{\frac{r-1}{p-1}}}
    \leq (p-1) \left( \frac{p^{r-p} \epsilon^p}{\mu^r} \right)^{\frac{1}{p-1}},
    \quad n \geq 0.
\end{equation}
\end{theorem}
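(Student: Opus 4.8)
The plan is to mirror the proof of \cref{Thm:PPM_linear}, adjusting only for the fact that the proximal term now has order $r$ strictly larger than the order $p$ of uniform convexity. Two elementary identities drive everything: from~\eqref{J_p} and~\eqref{j} one has $(J_r(v),v) = \| v \|^r$ and $( J_r(v), w ) \le \| v \|^{r-1} \| w \|$ (the latter being~\eqref{generalized_CS} with $p$ replaced by $r$), while the optimality condition~\eqref{PPM_optimality} reads $\nabla F(u^{(n+1)}) = - \epsilon J_r(u^{(n+1)} - u^{(n)})$.

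First I would establish a descent estimate: applying $(p,\mu)$-uniform convexity to the pair $(u^{(n)}, u^{(n+1)})$ exactly as in~\eqref{Thm3:PPM_linear} and substituting~\eqref{PPM_optimality} gives
\[
F(u^{(n)}) \ge F(u^{(n+1)}) + \epsilon \| u^{(n+1)} - u^{(n)} \|^r + \frac{\mu}{p} \| u^{(n+1)} - u^{(n)} \|^p ,
\]
so in particular $\| u^{(n+1)} - u^{(n)} \|^p \le \frac{p}{\mu}\big( F(u^{(n)}) - F(u) \big)$ and the energy is nonincreasing. Next, following~\eqref{Thm4:PPM_linear}, I would apply $(p,\mu)$-uniform convexity to the pair $(u, u^{(n+1)})$, use~\eqref{PPM_optimality} and the generalized Cauchy--Schwarz inequality, and maximize $\epsilon \| u^{(n+1)} - u^{(n)} \|^{r-1} t - \frac{\mu}{p} t^p$ over $t = \| u - u^{(n+1)} \| \ge 0$, obtaining
\[
F(u^{(n+1)}) - F(u) \le \frac{p-1}{p}\, \epsilon^{\frac{p}{p-1}} \mu^{-\frac{1}{p-1}}\, \| u^{(n+1)} - u^{(n)} \|^{\frac{(r-1)p}{p-1}} .
\]
Combining the two displays---inserting the bound $\| u^{(n+1)} - u^{(n)} \|^p \le \frac{p}{\mu}(F(u^{(n)}) - F(u))$ into the second---and collecting powers of $p$, $\epsilon$, $\mu$ (using $\frac{r-1}{p-1} - 1 = \frac{r-p}{p-1}$) yields exactly~\eqref{Thm1:PPM_superlinear}.

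It remains to justify that the convergence is genuinely superlinear rather than merely the stated inequality. Summing the descent estimate over $n$ shows $\sum_n \| u^{(n+1)} - u^{(n)} \|^p \le \frac{p}{\mu}\big( F(u^{(0)}) - F(u) \big) < \infty$, hence $\| u^{(n+1)} - u^{(n)} \| \to 0$, and then the second displayed inequality forces $F(u^{(n)}) - F(u) \to 0$. Consequently~\eqref{Thm1:PPM_superlinear} rewrites as $\frac{F(u^{(n+1)}) - F(u)}{F(u^{(n)}) - F(u)} \le (p-1)\big( \frac{p^{r-p}\epsilon^p}{\mu^r} \big)^{\frac{1}{p-1}} \big( F(u^{(n)}) - F(u) \big)^{\frac{r-p}{p-1}}$, whose right-hand side tends to $0$ since $r > p$, i.e.\ Q-superlinear convergence of the energy error. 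No single step is hard; the only places requiring care are the exponent bookkeeping in the constant and the short argument that the energy error vanishes, which is what makes the word ``superlinearly'' in the statement substantive.
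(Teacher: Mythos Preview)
Your proof is correct and uses the same two key estimates as the paper: the descent inequality $F(u^{(n)}) \geq F(u^{(n+1)}) + \epsilon \| u^{(n+1)} - u^{(n)} \|^r + \frac{\mu}{p} \| u^{(n+1)} - u^{(n)} \|^p$ and the upper bound $F(u^{(n+1)}) - F(u) \leq \frac{p-1}{p}\epsilon^{p/(p-1)}\mu^{-1/(p-1)}\|u^{(n+1)}-u^{(n)}\|^{p(r-1)/(p-1)}$. The only cosmetic difference is the direction of substitution: you bound $\|u^{(n+1)}-u^{(n)}\|^p$ by $\frac{p}{\mu}\zeta_n$ from the descent estimate and insert into the second inequality, whereas the paper bounds $\|u^{(n+1)}-u^{(n)}\|$ from below via $\zeta_{n+1}$ using the second inequality and inserts into the first, passing through an intermediate inequality of the form $\zeta_n \geq \zeta_{n+1} + C_1 \zeta_{n+1}^{r(p-1)/(p(r-1))} + C_2 \zeta_{n+1}^{(p-1)/(r-1)}$ before dropping terms; both routes produce exactly the same constant, and your explicit argument that $\zeta_n \to 0$ is a welcome spelling-out of what the paper leaves implicit.
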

\begin{proof}
Proceeding similarly as in~\eqref{Thm3:PPM_linear}, we obtain
\begin{equation}
\label{Thm2:PPM_superlinear}
    F(u^{(n)}) \geq F(u^{(n+1)}) + \epsilon \| u^{(n+1)} - u^{(n)} \|^r + \frac{\mu}{p} \| u^{(n+1)} - u^{(n)} \|^p.
\end{equation}
On the other hand, proceeding similarly as in~\eqref{Thm4:PPM_linear}, we obtain
\begin{equation}
\label{Thm3:PPM_superlinear}
\begin{split}
F(u^{(n+1)}) - F(u)
&=  \epsilon ( J_r (u^{(n+1)}-u^{(n)}), u - u^{(n+1)}) - \frac{\mu}{p} \| u - u^{(n+1)} \|^p \\
&\stackrel{\eqref{generalized_CS}}{\leq} \sup_{t \geq 0} \left\{ \epsilon \| u^{(n+1)}-u^{(n)}\|^{r-1} t - \frac{\mu}{p} t^p \right\} \\
&= \frac{p-1}{p} \epsilon^{\frac{p}{p-1}} \mu^{-\frac{1}{p-1}} \|u^{(n+1)}-u^{(n)}\|^{\frac{p(r-1)}{p-1}}.
\end{split}
\end{equation}
Combining~\eqref{Thm2:PPM_superlinear} and~\eqref{Thm3:PPM_superlinear} yields
\begin{equation}
\label{Thm4:PPM_superlinear}
\zeta_n \geq \zeta_{n+1}
+ \left( \frac{p}{p-1} \right)^{\frac{r(p-1)}{p(r-1)}} \left( \frac{\mu^{\frac{r}{p}}}{\epsilon} \right)^{\frac{1}{r-1}} \zeta_{n+1}^{\frac{r(p-1)}{p(r-1)}}
+ \frac{1}{p} \left( \frac{p}{p-1} \right)^{\frac{p-1}{r-1}} \left( \frac{\mu^{\frac{r}{p}}}{\epsilon} \right)^{\frac{p}{r-1}} \zeta_{n+1}^{\frac{p-1}{r-1}},
\end{equation}
where $\zeta_n = F(u^{(n)}) - F(u)$.
Note that
\begin{equation*}
    0 < \frac{p-1}{r-1} < \frac{r (p-1)}{p (r-1)} < 1.
\end{equation*}
We observe that~\eqref{Thm4:PPM_superlinear} implies $\zeta_n \to 0$.
Moreover, by dropping the first two terms on the right-hand side, we obtain
\begin{equation*}
    \zeta_n \geq \frac{1}{p} \left( \frac{p}{p-1} \right)^{\frac{p-1}{r-1}} \left( \frac{\mu^{\frac{r}{p}}}{\epsilon} \right)^{\frac{p}{r-1}} \zeta_{n+1}^{\frac{p-1}{r-1}},
\end{equation*}
which yields~\eqref{Thm1:PPM_superlinear}.
\end{proof}

Finally, we consider the case $r < p$.
In this setting, we obtain sublinear convergence of \cref{Alg:PPM}, as stated in \cref{Thm:PPM_sublinear}.
Nevertheless, due to the additional uniform convexity assumption, the convergence order is still better than that obtained in existing works~\cite{Nesterov:2023}.

\begin{theorem}
\label{Thm:PPM_sublinear}
In~\eqref{model_PPM}, suppose that $F$ is $(p, \mu)$-uniformly convex for some $p \geq 2$ and $\mu > 0$.
In \cref{Alg:PPM}, if $r < p$, then we have
\begin{equation*}
    F(u^{(n)}) - F(u)
    \leq \frac{\zeta_0}{\left( 1 + \frac{n}{\beta+1} \log ( 1 + C_{\epsilon} \zeta_0^{1/\beta} ) \right)^{\beta}},
    \quad n \geq 0,
\end{equation*}
where $\zeta_0 = F(u^{(0)}) - F(u)$ and
\begin{equation*}
    \beta = \frac{p (r-1)}{p-r}, \quad
    C_{\epsilon} = \left( \frac{p}{p-1} \right)^{\frac{r(p-1)}{p(r-1)}} \left( \frac{\mu^{\frac{r}{p}}}{\epsilon} \right)^{\frac{1}{r-1}}.
\end{equation*}
\end{theorem}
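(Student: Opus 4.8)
\textit{Proof proposal.} The plan is to reuse the one-step energy recursion already obtained in the proof of \cref{Thm:PPM_superlinear} and then to extract a sublinear rate from it via a change of variables that linearizes the recursion. First I observe that the derivation of~\eqref{Thm4:PPM_superlinear}, which is built only from the uniform convexity inequalities~\eqref{Thm2:PPM_superlinear} and~\eqref{Thm3:PPM_superlinear}, never uses the sign of $r-p$; hence~\eqref{Thm4:PPM_superlinear} holds verbatim when $r<p$. Its second coefficient is exactly $C_\epsilon$, and, since $\beta = p(r-1)/(p-r)$, a short computation gives $\frac{r(p-1)}{p(r-1)} = 1+\frac1\beta$ and $\frac{p-1}{r-1} = 1+\frac p\beta$, both exceeding $1$ because $r<p$. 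Dropping the (nonnegative) third term in~\eqref{Thm4:PPM_superlinear}, I arrive at the clean recursion
\begin{equation*}
\zeta_n \geq \zeta_{n+1} + C_\epsilon \zeta_{n+1}^{1+1/\beta}, \quad n \geq 0, \qquad \zeta_n = F(u^{(n)}) - F(u),
\end{equation*}
which in particular shows that $\{\zeta_n\}$ is non-increasing and (as in \cref{Thm:PPM_superlinear}) converges to $0$; if $\zeta_{n_0}=0$ for some $n_0$ the claim is trivial, so I may assume $\zeta_n>0$ for all $n$.

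Next I linearize. Writing $\delta_n = \zeta_n/\zeta_0 \in (0,1]$, $\gamma = \beta/(\beta+1) \in (0,1)$, $D = C_\epsilon \zeta_0^{1/\beta}$, and $c_n = \delta_n^{-(1+1/\beta)}$, the recursion becomes $c_n^{-\gamma} \geq c_{n+1}^{-\gamma} + D c_{n+1}^{-1}$, with $c_0 = 1$ and $\{c_n\}$ non-decreasing, so $c_n\geq 1$. Multiplying by $c_{n+1}$, then using $c_{n+1}^{1-\gamma} \geq c_n^{1-\gamma}$, then multiplying by $c_n^{\gamma}$ yields the additive recursion
\begin{equation*}
c_{n+1} \geq c_n + D c_n^{\gamma}, \quad n \geq 0.
\end{equation*}

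Finally I telescope after applying the increasing, concave map $g(c) = c^{1-\gamma}$. From the additive recursion, $g(c_{n+1}) - g(c_n) \geq (c_n + D c_n^{\gamma})^{1-\gamma} - c_n^{1-\gamma} = c_n^{1-\gamma}\bigl[(1 + D c_n^{-(1-\gamma)})^{1-\gamma} - 1\bigr]$. The key elementary fact is that $s \mapsto \bigl((1+s)^{1-\gamma}-1\bigr)/s$ is non-increasing on $(0,\infty)$, since $(1+s)^{1-\gamma}-1$ is concave and vanishes at $s=0$; applying this with $s = D c_n^{-(1-\gamma)} \in (0,D]$ (here $c_n \geq 1$ is used) gives $c_n^{1-\gamma}\bigl[(1 + D c_n^{-(1-\gamma)})^{1-\gamma} - 1\bigr] \geq (1+D)^{1-\gamma} - 1 \geq (1-\gamma)\log(1+D)$, the last inequality by $e^x\geq 1+x$. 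Hence $g(c_{n+1}) - g(c_n) \geq \frac{1}{\beta+1}\log(1+D)$, and telescoping from $g(c_0)=1$ gives $c_n^{1-\gamma} \geq 1 + \frac{n}{\beta+1}\log(1+D)$. Raising to the power $1/(1-\gamma) = \beta+1$, and then using $\zeta_n = \zeta_0 c_n^{-\gamma}$ together with $(\beta+1)\gamma = \beta$, produces exactly the asserted bound with $C_\epsilon$ and $\beta$ as defined and $D = C_\epsilon \zeta_0^{1/\beta}$, $\zeta_0 = F(u^{(0)}) - F(u)$.

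The main obstacle is the passage through the additive recursion $c_{n+1}\geq c_n + D c_n^{\gamma}$ with $0<\gamma<1$: this inequality alone does not determine $c_n$ (each $c_{n+1}$ is only bounded below), so one must verify that composing with $g$ converts it into an increment bounded uniformly below, and getting the sharp constant $\frac{1}{\beta+1}\log(1+D)$ rather than a cruder one relies on the monotone difference quotient of $(1+s)^{1-\gamma}-1$ and on $c_n\geq 1$. Everything else is bookkeeping of the exponents $\gamma$, $\beta$, and $1+1/\beta$.
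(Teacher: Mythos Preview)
Your argument is correct. The first half---observing that~\eqref{Thm4:PPM_superlinear} is insensitive to the sign of $r-p$, then dropping the third term to reach $\zeta_n \geq \zeta_{n+1} + C_\epsilon \zeta_{n+1}^{1+1/\beta}$---is exactly what the paper does. Where you diverge is in the final step: the paper simply invokes~\cite[Lemma~A.1]{Nesterov:2022} to pass from this recursion to the stated sublinear bound, whereas you supply a self-contained elementary derivation via the change of variables $c_n = (\zeta_n/\zeta_0)^{-(1+1/\beta)}$, the additive recursion $c_{n+1} \geq c_n + D c_n^{\gamma}$, and the telescoping after applying $g(c)=c^{1-\gamma}$. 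Your treatment of the increment---rewriting it as $D\cdot\bigl[(1+s)^{1-\gamma}-1\bigr]/s$ with $s = D c_n^{-(1-\gamma)} \in (0,D]$, using concavity to bound the difference quotient below by its value at $s=D$, and then $e^x\geq 1+x$ to reach $(1-\gamma)\log(1+D)$---recovers precisely the constant in the theorem. The benefit of your route is that the proof becomes fully self-contained; the paper's route is shorter on the page but defers the work to an external lemma.
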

\begin{proof}
We start from~\eqref{Thm4:PPM_superlinear}, which remains valid in the case $r < p$.
Note that
\begin{equation*}
    1 < \frac{r (p-1)}{p (r-1)} < \frac{p-1}{r-1}.
\end{equation*}
From~\eqref{Thm4:PPM_superlinear}, we drop the last term on the right-hand side to obtain
\begin{equation}
\label{Thm1:PPM_sublinear}
    \zeta_n \geq \zeta_{n+1}
+ \left( \frac{p}{p-1} \right)^{\frac{r(p-1)}{p(r-1)}} \left( \frac{\mu^{\frac{r}{p}}}{\epsilon} \right)^{\frac{1}{r-1}} \zeta_{n+1}^{\frac{r(p-1)}{p(r-1)}}.
\end{equation}
Invoking~\cite[Lemma~A.1]{Nesterov:2022} together with~\eqref{Thm1:PPM_sublinear} yields the desired result.
\end{proof}

In \cref{Thm:PPM_sublinear}, we observe that the constant $C_{\epsilon}$ tends to $\infty$ as $\epsilon \to 0$.
This implies that the convergence behavior of \cref{Alg:PPM} becomes arbitrarily fast as $\epsilon \to 0$, a phenomenon similar to that observed in the case of \cref{Thm:PPM_linear}.

\section{High-order augmented Lagrangian method}
\label{Sec:ALM}
In this section, we propose a high-order version of the augmented Lagrangian method~\cite{Hestenes:1969,Powell:1969} for solving convex optimization problems with linear constraints.
By drawing a connection between the high-order proximal point method introduced in \cref{Sec:PPM} and the proposed high-order augmented Lagrangian method, we analyze the convergence of the proposed method.

We consider the model constrained convex optimization problem~\eqref{model_ALM}.
The high-order augmented Lagrangian method for solving~\eqref{model_ALM} is presented in \cref{Alg:ALM}.
Note that, when $r = 2$, \cref{Alg:ALM} reduces to the classical augmented Lagrangian method~\eqref{ALM_intro}.

\begin{algorithm}
\caption{High-order augmented Lagrangian method for solving~\eqref{model_ALM}}
\begin{algorithmic}[]
\label{Alg:ALM}
\STATE Given $r > 1$ and $\epsilon > 0$:
\STATE Choose $\lambda^{(0)} \in W$.
\FOR{$n=0,1,2,\dots$}
    \STATE $\displaystyle
    u^{(n+1)} = \operatornamewithlimits{\arg\min}_{v \in V} \left\{ F (v) + ( \lambda^{(n)}, Bv - g ) + \frac{1}{r^* \epsilon^{r^* - 1}} \| Bv - g \|_*^{r^*} \right\}
    $
    \STATE $\displaystyle
    \lambda^{(n+1)} = \operatornamewithlimits{\arg\min}_{\sigma \in W} \left\{ - (\sigma, Bu^{(n+1)} - g) + \frac{\epsilon}{r} \| \sigma - \lambda^{(n)} \|^r \right\}
    $
\ENDFOR
\end{algorithmic}
\end{algorithm}

The $\lambda^{(n+1)}$-subproblem in \cref{Alg:ALM} admits an explicit solution and therefore does not need to be solved by an iterative method, as in the conventional augmented Lagrangian method~\eqref{ALM_intro}.
Note that the first-order optimality condition for the $\lambda^{(n+1)}$-subproblem is given by
\begin{equation}
\label{lambda_optimality}
- (B u^{(n+1)} - g) + \epsilon J_r (\lambda^{(n+1)} - \lambda^{(n)}) = 0.
\end{equation}
From~\eqref{lambda_optimality}, we readily obtain
\begin{equation}
\begin{split}
\label{dual_update_unstable}
    \lambda^{(n+1)}
    &= \lambda^{(n)} + J_{r^*}^* ( \epsilon^{-1} (Bu^{(n+1)} - g) ) \\
    &=\lambda^{(n)} + \epsilon^{- (r^* - 1)} \| Bu^{(n+1)} - g \|_*^{r^*-1} j_* (Bu^{(n+1)} - g),
\end{split}
\end{equation}
where $J_{r^*}^*$ was defined in~\eqref{J_p_star}.
Although~\eqref{dual_update_unstable} appears reasonable, it may suffer from numerical instability, especially when $\epsilon$ is small and $r^*$ is close to $1$.
We will address how to resolve such numerical instability in \cref{Sec:Computation}.

To analyze the convergence of the high-order augmented Lagrangian method~(\cref{Alg:ALM}), we first consider the following Fenchel--Rockafellar dual problem of~\eqref{model_ALM} (see, e.g.,~\cite{CP:2016,JPX:2025} for details):
\begin{equation}
\label{dual_ALM}
\min_{\sigma \in W} \left\{ E_{\mathrm{d}} (\sigma) := F^* ( -B^t \sigma) + (g, \sigma) \right\}.
\end{equation}
If a solution of~\eqref{dual_ALM} exists, we denote it by $\lambda \in W$.
The equivalence between the proximal point method for solving~\eqref{dual_ALM} and the augmented Lagrangian method for solving~\eqref{model_ALM} has been discussed in the literature; see, e.g.,~\cite{PH:2025,Rockafellar:1976b}.
In what follows, we extend this relationship to the high-order methods introduced in this paper.

Given $r > 1$ and $\epsilon > 0$, the high-order proximal point method (\cref{Alg:PPM}) applied to~\eqref{dual_ALM} takes the form
\begin{equation}
\label{PPM_dual}
\sigma^{(n+1)} = \operatornamewithlimits{\arg\min}_{\sigma \in W} \left\{ F^* (-B^t \sigma) + (g, \sigma) + \frac{\epsilon}{r} \| \sigma - \sigma^{(n)} \|^r \right\},
\quad n \geq 0.
\end{equation}
In \cref{Thm:equiv}, using a dualization argument introduced in~\cite{JPX:2025}, we establish the equivalence between \cref{Alg:ALM} and~\eqref{PPM_dual}.

\begin{theorem}
\label{Thm:equiv}
The high-order augmented Lagrangian method~(\cref{Alg:ALM}) for solving~\eqref{model_ALM} and the high-order proximal point method~\eqref{PPM_dual} for solving~\eqref{dual_ALM} are equivalent in the following sense: if $\lambda^{(0)} = \sigma^{(0)}$, then we have $\lambda^{(n)} = \sigma^{(n)}$ for all $n \geq 0$.
\end{theorem}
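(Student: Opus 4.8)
The plan is to proceed by induction on $n$, showing that the two sequences coincide step by step. The base case $n=0$ holds by assumption ($\lambda^{(0)} = \sigma^{(0)}$), so suppose $\lambda^{(n)} = \sigma^{(n)}$ for some $n \geq 0$; I would then prove $\lambda^{(n+1)} = \sigma^{(n+1)}$. The key is to rewrite the $u$-subproblem of \cref{Alg:ALM} in a way that exposes its dual structure. Specifically, I would introduce a dummy variable for the constraint residual and rewrite
\begin{equation*}
\min_{v \in V} \left\{ F(v) + (\lambda^{(n)}, Bv - g) + \frac{1}{r^* \epsilon^{r^*-1}} \| Bv - g \|_*^{r^*} \right\}
= \min_{v \in V} \min_{w \in W, \, w = Bv - g} \left\{ F(v) + (\lambda^{(n)}, w) + \frac{1}{r^* \epsilon^{r^*-1}} \| w \|_*^{r^*} \right\},
\end{equation*}
and then dualize the inner equality constraint $w = Bv - g$ with a multiplier $\sigma \in W$. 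Using \eqref{J_p_star} and the fact that the conjugate of $\frac{1}{r^*} \| \cdot \|_*^{r^*}$ (suitably scaled) is $\frac{1}{r} \| \cdot \|^r$ (suitably scaled)—a Legendre--Fenchel duality between the power-type norm functionals—the min over $v$ and $w$ turns into the Fenchel--Rockafellar dual, and one recognizes that the optimal dual multiplier $\sigma$ for this subproblem solves precisely
\begin{equation*}
\sigma = \operatornamewithlimits{\arg\min}_{\sigma \in W} \left\{ F^*(-B^t \sigma) + (g, \sigma) + \frac{\epsilon}{r} \| \sigma - \lambda^{(n)} \|^r \right\},
\end{equation*}
which is exactly one step of \eqref{PPM_dual} starting from $\sigma^{(n)} = \lambda^{(n)}$, hence $\sigma = \sigma^{(n+1)}$.

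What remains is to show that this optimal dual multiplier $\sigma$ coincides with $\lambda^{(n+1)}$ as produced by the explicit $\lambda$-update in \cref{Alg:ALM}. For this I would use the optimality conditions: the first-order condition for the $u$-subproblem reads $\nabla F(u^{(n+1)}) + B^t \lambda^{(n)} + \frac{1}{\epsilon^{r^*-1}} B^t J_{r^*}^*(Bu^{(n+1)} - g) = 0$, and comparing with the Lagrangian stationarity for the dummy-variable formulation shows that the inner multiplier is $\sigma = \lambda^{(n)} + J_{r^*}^*(\epsilon^{-1}(Bu^{(n+1)} - g))$. But \eqref{dual_update_unstable} shows this is exactly $\lambda^{(n+1)}$. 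Combining the two identifications gives $\lambda^{(n+1)} = \sigma = \sigma^{(n+1)}$, closing the induction.

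The main obstacle I anticipate is handling the Legendre--Fenchel duality between the two norm-power functionals cleanly—that is, verifying that the convex conjugate of $w \mapsto \frac{1}{r^* \epsilon^{r^*-1}} \| w \|_*^{r^*}$ is $\sigma \mapsto \frac{\epsilon}{r} \| \sigma \|^r$, including tracking the $\epsilon$ exponents and the interplay between $\| \cdot \|$ and $\| \cdot \|_*$ under conjugation. This is the computation that makes the scaling in \cref{Alg:ALM} (the somewhat unusual $\frac{1}{r^*\epsilon^{r^*-1}}$ coefficient) work out to the standard $\frac{\epsilon}{r}$ proximal term in \eqref{PPM_dual}. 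Once that conjugacy is pinned down—using \eqref{J_p}, \eqref{J_p_star}, and the standard fact that $(\tfrac{1}{p}\|\cdot\|^p)^* = \tfrac{1}{p^*}\|\cdot\|_*^{p^*}$—the rest is bookkeeping with optimality conditions, and the strict convexity of all terms involved guarantees that every $\arg\min$ is a singleton so the equivalence is genuine and not merely up to a set of minimizers. A subtlety worth a remark is that $F$ is only assumed convex (not strictly so), but the dual problem \eqref{dual_ALM} still has well-defined behavior because the proximal regularization $\frac{\epsilon}{r}\|\sigma - \sigma^{(n)}\|^r$ makes \eqref{PPM_dual} strictly convex in $\sigma$; I would note that the dualization argument of~\cite{JPX:2025} is what licenses interchanging the min operations and passing to the dual without duality gap in this regularized setting.
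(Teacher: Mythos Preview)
Your proposal is correct and follows essentially the same route as the paper: both arguments rest on Fenchel--Rockafellar duality between the $u^{(n+1)}$-subproblem of \cref{Alg:ALM} and the proximal step~\eqref{PPM_dual}, driven by the conjugacy $(\tfrac{1}{r^*\epsilon^{r^*-1}}\|\cdot\|_*^{r^*})^* = \tfrac{\epsilon}{r}\|\cdot\|^r$, and then identify the optimal multiplier with $\lambda^{(n+1)}$ via the optimality condition~\eqref{lambda_optimality}. The only cosmetic difference is that the paper packages the dualization by computing the conjugate of $\phi_n(\sigma) = (g,\sigma) + \tfrac{\epsilon}{r}\|\sigma - \sigma^{(n)}\|^r$ directly, whereas you introduce a dummy variable $w = Bv - g$ and dualize the constraint; these are two presentations of the same Fenchel--Rockafellar argument.
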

\begin{proof}
Fix any $n \geq 0$.
We define the functional $\phi_n \colon W \to \mathbb{R}$ by
\begin{equation*}
    \phi_n (\sigma) = (g, \sigma) + \frac{\epsilon}{r} \| \sigma - \sigma^{(n)} \|^r,
    \quad \sigma \in W.
\end{equation*}
Its Legendre--Fenchel conjugate $\phi_n^* \colon W \to \mathbb{R}$ is given by
\begin{equation*}
    \phi_n^* (\sigma) = \frac{1}{r^* \epsilon^{r^* - 1}} \| \sigma - g \|_*^{r^*} + (\sigma - g, \sigma^{(n)}),
    \quad \sigma \in W.
\end{equation*}
Therefore, by Fenchel--Rockafellar duality~\cite{CP:2016,JPX:2025}, we obtain the following dual problem of~\eqref{PPM_dual}:
\begin{equation}
\label{Thm1:equiv}
    u^{(n+1)} = \operatornamewithlimits{\arg\min}_{v \in V} \left\{ F (v) + ( \lambda^{(n)}, Bv - g ) + \frac{1}{r^* \epsilon^{r^* - 1}} \| Bv - g \|_*^{r^*} \right\},
\end{equation}
which coincides with the $u^{(n+1)}$-subproblem in \cref{Alg:ALM}.
Moreover, the primal--dual relation between~\eqref{PPM_dual} and~\eqref{Thm1:equiv} is given by
\begin{subequations}
\begin{align}
\label{primal_dual_1}
    -B^t \sigma^{(n+1)} &= \nabla F (u^{(n+1)}), \\
\label{primal_dual_2}
    B u^{(n+1)} &= \nabla \phi_n (\sigma^{(n+1)}).
\end{align}
\end{subequations}
Note that~\eqref{primal_dual_2} is equivalent to the first-order optimality condition~\eqref{lambda_optimality} for the $\lambda^{(n+1)}$-subproblem in \cref{Alg:ALM} after identifying $\sigma^{(n+1)}$ with $\lambda^{(n+1)}$.
This completes the proof.
\end{proof}

Thanks to the equivalence established in \cref{Thm:equiv}, by invoking the convergence results for the high-order proximal point method given in \cref{Thm:PPM_linear,Thm:PPM_superlinear,Thm:PPM_sublinear}, we readily obtain the following convergence result for the high-order augmented Lagrangian method.

\begin{corollary}
\label{Cor:equiv}
In~\eqref{model_ALM} and~\eqref{dual_ALM}, suppose that $E_{\mathrm{d}}$ is $(p, \mu)$-uniformly convex for some $p \geq 2$ and $\mu > 0$.
Then in \cref{Alg:ALM}, the following hold:
\begin{enumerate}[(a)]
\item If $r = p$, then we have
\begin{equation*}
\resizebox{0.9\textwidth}{!}{$\displaystyle
    \frac{E_{\mathrm{d}} (\lambda^{(n+1)}) - E_{\mathrm{d}} ( \lambda)}{E_{\mathrm{d}} ( \lambda^{(n)}) - E_{\mathrm{d}} ( \lambda )}
    \leq \frac{1}{1 + \gamma}
     \text{ with }
    \gamma = \frac{p}{p-1} \left( \frac{\mu}{\epsilon} \right)^{\frac{1}{p-1}} + \frac{1}{p-1} \left( \frac{\mu}{\epsilon} \right)^{\frac{p}{p-1}},
    \quad n \geq 0.
    $}
\end{equation*}
\item If $r > p$, then $E_{\mathrm{d}} (\lambda^{(n)})$ converges to $E_{\mathrm{d}} (\lambda)$ superlinearly and satisfies
\begin{equation*}
    \frac{E_{\mathrm{d}} (\lambda^{(n+1)}) - E_{\mathrm{d}} ( \lambda)}{( E_{\mathrm{d}} (\lambda^{(n)}) - E_{\mathrm{d}} ( \lambda) )^{\frac{r-1}{p-1}}}
    \leq (p-1) \left( \frac{p^{r-p} \epsilon^p}{\mu^r} \right)^{\frac{1}{p-1}},
    \quad n \geq 0.
\end{equation*}
\item If $r < p$, then we have
\begin{equation*}
    E_{\mathrm{d}} (\lambda^{(n)}) - E_{\mathrm{d}} ( \lambda)
    \leq \frac{\zeta_0}{\left( 1 + \frac{n}{\beta+1} \log ( 1 + C_{\epsilon} \zeta_0^{1/\beta} ) \right)^{\beta}},
    \quad n \geq 0,
\end{equation*}
where $\zeta_0 = E_{\mathrm{d}} (\lambda^{(0)}) - E_{\mathrm{d}} ( \lambda)$ and
\begin{equation*}
    \beta = \frac{p (r-1)}{p-r}, \quad
    C_{\epsilon} = \left( \frac{p}{p-1} \right)^{\frac{r(p-1)}{p(r-1)}} \left( \frac{\mu^{\frac{r}{p}}}{\epsilon} \right)^{\frac{1}{r-1}}.
\end{equation*}
\end{enumerate}
\end{corollary}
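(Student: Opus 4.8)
The plan is to reduce everything to the analysis already carried out for the high-order proximal point method in \cref{Sec:PPM}. By \cref{Thm:equiv}, running \cref{Alg:ALM} on~\eqref{model_ALM} with $\lambda^{(0)} = \sigma^{(0)}$ produces exactly the iterates $\lambda^{(n)} = \sigma^{(n)}$ generated by the high-order proximal point method~\eqref{PPM_dual} for the dual problem~\eqref{dual_ALM}. But~\eqref{PPM_dual} is nothing other than \cref{Alg:PPM} with the objective functional $F$ replaced by the dual energy $E_{\mathrm{d}}$. Hence it suffices to verify that $E_{\mathrm{d}}$ satisfies the hypotheses of \cref{Thm:PPM_linear,Thm:PPM_superlinear,Thm:PPM_sublinear}, and then quote those theorems verbatim under the substitutions $F \leftrightarrow E_{\mathrm{d}}$, $u^{(n)} \leftrightarrow \lambda^{(n)}$, $u \leftrightarrow \lambda$, with $\zeta_n = E_{\mathrm{d}}(\lambda^{(n)}) - E_{\mathrm{d}}(\lambda)$.

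The hypotheses to check are: (i) $E_{\mathrm{d}}$ is differentiable and convex; (ii) $E_{\mathrm{d}}$ attains its minimum; and (iii) $E_{\mathrm{d}}$ is $(p,\mu)$-uniformly convex. Item (iii) is assumed outright. Item (i) is built into the very definition of $(p,\mu)$-uniform convexity used in \cref{Sec:Preliminaries}, which presupposes differentiability; convexity of $E_{\mathrm{d}}$ follows from that of $F^*$ together with the affine dependence on $\sigma$. For item (ii), $(p,\mu)$-uniform convexity forces $E_{\mathrm{d}}(\sigma) \to \infty$ as $\|\sigma\| \to \infty$, so on the finite-dimensional space $W$ a minimizer exists and is in fact unique; this minimizer is precisely the $\lambda$ appearing in the statement.

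With (i)--(iii) in hand, the three cases follow immediately. For $r = p$, the energy-error estimate~\eqref{Thm1:PPM_linear} of \cref{Thm:PPM_linear} gives part~(a); note that part~(a) does not assert the iterate-ratio bound~\eqref{Thm2:PPM_linear}, so no additional smallness condition on $\epsilon$ is required. For $r > p$, \cref{Thm:PPM_superlinear} gives the superlinear rate of part~(b). For $r < p$, \cref{Thm:PPM_sublinear} gives the sublinear rate with the stated $\beta$ and $C_{\epsilon}$, which is part~(c). Since the argument is a direct translation, there is essentially no obstacle beyond the bookkeeping of the substitution; the only point requiring a moment of care is confirming that the uniform convexity hypothesis on $E_{\mathrm{d}}$ already supplies the differentiability and the existence of $\lambda$ that the proximal-point theorems take for granted.
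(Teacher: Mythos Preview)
Your proposal is correct and matches the paper's approach exactly: the paper simply states that, thanks to the equivalence in \cref{Thm:equiv}, one invokes \cref{Thm:PPM_linear,Thm:PPM_superlinear,Thm:PPM_sublinear} for $E_{\mathrm{d}}$ in place of $F$ to obtain the corollary. Your additional remarks on why $E_{\mathrm{d}}$ is differentiable and admits a minimizer are a welcome elaboration of points the paper leaves implicit.
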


\begin{remark}
\label{Rem:equiv}
In \cref{Cor:equiv}, since $E_{\mathrm{d}}$ is $(p, \mu)$-uniformly convex and $\lambda$ is a minimizer of $E_{\mathrm{d}}$, we have
\begin{equation}
\label{Rem1:equiv}
E_{\mathrm{d}} ( \lambda^{(n)}) - E_{\mathrm{d}} ( \lambda) \geq \frac{\mu}{p} \| \lambda^{(n)} - \lambda \|^p,
\quad n \geq 0.
\end{equation}
Hence, combining~\eqref{Rem1:equiv} with \cref{Cor:equiv} yields norm convergence of the dual sequence $\{ \lambda^{(n)} \}$ to $\lambda$.
\end{remark}

\begin{remark}
\label{Rem:uniform}
Since the high-order proximal point method is known to be convergent even without a uniform convexity assumption (see~\cite{AN:2024,Nesterov:2021,Nesterov:2023}), one can similarly deduce sublinear convergence of the high-order augmented Lagrangian method presented in \cref{Alg:ALM}, analogous to \cref{Cor:equiv}(c), without assuming that $E_{\mathrm{d}}$ is uniformly convex.
We omit the details.
\end{remark}

While \cref{Cor:equiv} already provides a desirable convergence result for \cref{Alg:ALM}, in many cases it is also important to verify whether the primal sequence $\{ u^{(n)} \}$ converges.
In \cref{Thm:primal}, we present a primal convergence result in terms of the symmetrized Bregman distance~\eqref{Bregman_sym}, following an argument similar to that in~\cite[Theorem~A.4]{PH:2025}.

\begin{theorem}
\label{Thm:primal}
In~\eqref{model_ALM} and~\eqref{dual_ALM}, suppose that $E_{\mathrm{d}}$ is $(p, \mu)$-uniformly convex for some $p \geq 2$ and $\mu > 0$.
Then in \cref{Alg:ALM}, we have
\begin{equation*}
    D_F^{\mathrm{sym}} (u^{(n+1)}, u)
    \leq (r-1) (\zeta_n - \zeta_{n+1}) + \frac{\epsilon}{r} \left( \frac{p}{\mu} \right)^{\frac{r}{p}} \zeta_{n+1}^{\frac{r}{p}},
    \quad n \geq 0,
\end{equation*}
where $\zeta_n = E_{\mathrm{d}} ( \lambda^{(n)}) - E_{\mathrm{d}} (\lambda)$.
\end{theorem}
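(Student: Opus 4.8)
The plan is to transfer the estimate to the dual side via the equivalence established in \cref{Thm:equiv}, so that the entire argument reduces to a statement about the high-order proximal point iteration applied to $E_{\mathrm{d}}$, along the lines of \cite[Theorem~A.4]{PH:2025}.

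The starting point is a direct computation. Substituting the primal--dual relation \eqref{primal_dual_1}, $-B^t\lambda^{(n+1)} = \nabla F(u^{(n+1)})$, together with the optimality relations $-B^t\lambda = \nabla F(u)$ and $Bu = g$ at the solution of \eqref{model_ALM}, into the definition \eqref{Bregman_sym} of the symmetrized Bregman distance and using the adjoint, one obtains
\begin{equation*}
D_F^{\mathrm{sym}}(u^{(n+1)}, u) = (\lambda - \lambda^{(n+1)},\, Bu^{(n+1)} - g).
\end{equation*}
(This identity is nothing but $D_F^{\mathrm{sym}}(u^{(n+1)}, u) = D_{E_{\mathrm{d}}}^{\mathrm{sym}}(\lambda^{(n+1)}, \lambda)$, reflecting that $E_{\mathrm{d}}$ is $F^*$ pre-composed with $-B^t$ up to a linear term.) Invoking the optimality condition \eqref{lambda_optimality} of the $\lambda^{(n+1)}$-subproblem, $Bu^{(n+1)} - g = \epsilon J_r(\lambda^{(n+1)} - \lambda^{(n)})$, then rewrites the right-hand side as $\epsilon\,(J_r(\lambda^{(n+1)} - \lambda^{(n)}),\, \lambda - \lambda^{(n+1)})$.

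From here the estimate follows from a generalized Cauchy--Schwarz inequality and Young's inequality. Applying \eqref{generalized_CS} gives $D_F^{\mathrm{sym}}(u^{(n+1)}, u) \le \epsilon\,\|\lambda^{(n+1)} - \lambda^{(n)}\|^{r-1}\,\|\lambda - \lambda^{(n+1)}\|$, and Young's inequality with the conjugate exponents $r^* = r/(r-1)$ and $r$ splits the right-hand side into $\tfrac{r-1}{r}\,\epsilon\|\lambda^{(n+1)} - \lambda^{(n)}\|^r + \tfrac{\epsilon}{r}\|\lambda - \lambda^{(n+1)}\|^r$. It then remains to bound these two dual-side quantities, and this is where the hypothesis on $E_{\mathrm{d}}$ enters. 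For the first, the energy-decrease estimate of the high-order proximal point method---obtained exactly as in \eqref{Thm2:PPM_superlinear} with $E_{\mathrm{d}}$ in place of $F$---yields $\epsilon\|\lambda^{(n+1)} - \lambda^{(n)}\|^r \le \zeta_n - \zeta_{n+1}$. For the second, the $(p,\mu)$-uniform convexity of $E_{\mathrm{d}}$ together with $\nabla E_{\mathrm{d}}(\lambda) = 0$ gives $\tfrac{\mu}{p}\|\lambda - \lambda^{(n+1)}\|^p \le \zeta_{n+1}$ as in \eqref{Rem1:equiv}, hence $\|\lambda - \lambda^{(n+1)}\|^r \le (p/\mu)^{r/p}\,\zeta_{n+1}^{r/p}$. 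Combining, and using $\tfrac{r-1}{r} \le r-1$ together with $\zeta_n \ge \zeta_{n+1} \ge 0$, gives the claimed bound.

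I do not expect a serious obstacle here: once the identity $D_F^{\mathrm{sym}}(u^{(n+1)}, u) = \epsilon\,(J_r(\lambda^{(n+1)} - \lambda^{(n)}),\, \lambda - \lambda^{(n+1)})$ is in place, the rest is short. The only point that needs care is the bookkeeping in the final step---pairing the \emph{step} quantity $\|\lambda^{(n+1)} - \lambda^{(n)}\|$ with the telescoping difference $\zeta_n - \zeta_{n+1}$ and the \emph{distance-to-the-solution} quantity $\|\lambda - \lambda^{(n+1)}\|$ with $\zeta_{n+1}$, and arranging the Young splitting so that the second term emerges with exactly the constant $\tfrac{\epsilon}{r}(p/\mu)^{r/p}$.
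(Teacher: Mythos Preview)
Your proposal is correct and follows essentially the same route as the paper: both reduce $D_F^{\mathrm{sym}}(u^{(n+1)},u)$ to $\epsilon\,(J_r(\lambda^{(n+1)}-\lambda^{(n)}),\,\lambda-\lambda^{(n+1)})$ via~\eqref{primal_dual_1},~\eqref{model_ALM_optimality_1},~\eqref{model_ALM_optimality_2}, and~\eqref{lambda_optimality}, then apply~\eqref{generalized_CS} and Young's inequality, and finally control the two resulting terms by the proximal energy decrease and the uniform convexity of $E_{\mathrm{d}}$. The only cosmetic difference is that the paper uses the minimization property $\frac{\epsilon}{r}\|\lambda^{(n+1)}-\lambda^{(n)}\|^r \le \zeta_n-\zeta_{n+1}$, whereas you invoke the sharper first-order estimate $\epsilon\|\lambda^{(n+1)}-\lambda^{(n)}\|^r \le \zeta_n-\zeta_{n+1}$ (as in~\eqref{Thm2:PPM_superlinear}) and then weaken by $\tfrac{r-1}{r}\le r-1$; in fact your route shows the constant $(r-1)$ in the statement could be replaced by $\tfrac{r-1}{r}$.
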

\begin{proof}
Note that the optimality conditions for~\eqref{model_ALM} read
\begin{subequations}
\begin{align}
    \label{model_ALM_optimality_1}
    \nabla F(u) + B^t \lambda &= 0, \\
    \label{model_ALM_optimality_2}
    Bu &= g.
\end{align}
\end{subequations}
It follows that
\begin{equation}
\label{Thm1:primal}
\begin{split}
    D_F^{\mathrm{sym}} (u^{(n+1)}, u)
    &\stackrel{\eqref{Bregman_sym}}{=} (\nabla F (u^{(n+1)}) - \nabla F (u), u^{(n+1)} - u) \\
    &\stackrel{\eqref{primal_dual_1}, \eqref{model_ALM_optimality_1}}{=} ( - B^t (\lambda^{(n+1)} - \lambda), u^{(n+1)} - u ) \\
    &\stackrel{\eqref{lambda_optimality}}{=} - \epsilon (\lambda^{(n+1)} - \lambda, J_r (\lambda^{(n+1)} - \lambda^{(n)}) ) \\
    &\stackrel{\eqref{generalized_CS}}{\leq} \epsilon \| \lambda^{(n+1)} - \lambda \| \| \lambda^{(n+1)} - \lambda^{(n)} \|^{r-1} \\
    &\leq \frac{\epsilon}{r} \| \lambda^{(n+1)} - \lambda \|^r + \frac{(r-1) \epsilon}{r} \| \lambda^{(n+1)} - \lambda^{(n)} \|^r,
\end{split}
\end{equation}
where the last two inequalities are due to the Cauchy--Schwarz and Young inequalities.
Meanwhile, from the minimization property of $\lambda^{(n+1)}$ characterized by~\eqref{PPM_dual} and \cref{Thm:equiv}, we have
\begin{equation}
\label{Thm2:primal}
E_{\mathrm{d}} ( \lambda^{(n)} ) - E_{\mathrm{d}} ( \lambda^{(n+1)} ) \geq \frac{\epsilon}{r} \| \lambda^{(n+1)} - \lambda^{(n)} \|^r.
\end{equation}
In addition, by the $(p, \mu)$-uniform convexity of $E_{\mathrm{d}}$, we have
\begin{equation}
\label{Thm3:primal}
E_{\mathrm{d}} (\lambda^{(n+1)}) - E_{\mathrm{d}} (\lambda) \geq \frac{\mu}{p} \| \lambda^{(n+1)} - \lambda \|^p.
\end{equation}
Combining~\eqref{Thm1:primal},~\eqref{Thm2:primal}, and~\eqref{Thm3:primal} yields the desired result.
\end{proof}

We obtain \cref{Cor:primal} as a straightforward consequence of \cref{Thm:primal}.

\begin{corollary}
\label{Cor:primal}
In~\eqref{model_ALM} and~\eqref{dual_ALM}, suppose that $E_{\mathrm{d}}$ is $(p, \mu)$-uniformly convex for some $p \geq 2$ and $\mu > 0$.
Then in \cref{Alg:ALM}, the following hold:
\begin{enumerate}[(a)]
\item If $r = p$, then we have
\begin{equation*}
D_F^{\mathrm{sym}} (u^{(n)}, u)
\leq \left( p-1 + \frac{\epsilon}{\mu} \right) \left(\frac{1}{1+\gamma} \right)^{n-1} \zeta_0,
\quad n \geq 1,
\end{equation*}
where $\zeta_n = E_{\mathrm{d}} (\lambda^{(n)}) - E_{\mathrm{d}} (\lambda)$, and $\gamma$ was given in \cref{Cor:equiv}.
\item If $r > p$, then we have
\begin{equation*}
    D_F^{\mathrm{sym}} (u^{(n)}, u) = O ( \zeta_n ).
\end{equation*}
That is, $D_F^{\mathrm{sym}} (u^{(n)}, u)$ converges to $0$ superlinearly.
\item If $r < p$, then we have
\begin{equation*}
    D_F^{\mathrm{sym}} (u^{(n)}, u) = O ( \zeta_n^{\frac{r}{p}} ).
\end{equation*}
That is, $D_F^{\mathrm{sym}} (u^{(n)}, u)$ converges to $0$ sublinearly.
\end{enumerate}
\end{corollary}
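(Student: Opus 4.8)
The plan is to obtain \cref{Cor:primal} as a direct consequence of the estimate in \cref{Thm:primal} combined with the convergence rates for $\{\zeta_n\}$ already recorded in \cref{Cor:equiv}. The common starting point in all three regimes is
\begin{equation*}
D_F^{\mathrm{sym}}(u^{(n+1)}, u) \leq (r-1)(\zeta_n - \zeta_{n+1}) + \frac{\epsilon}{r}\left( \frac{p}{\mu} \right)^{\frac{r}{p}} \zeta_{n+1}^{\frac{r}{p}},
\end{equation*}
together with the observation that $\{\zeta_n\}$ is nonnegative and nonincreasing (the latter being exactly~\eqref{Thm2:primal}), so that $0 \leq \zeta_n - \zeta_{n+1} \leq \zeta_n$ and $\zeta_{n+1} \leq \zeta_n \leq \zeta_0$. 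From here the proof amounts to collapsing the right-hand side above into a single power of $\zeta_n$ and invoking the matching rate from \cref{Cor:equiv}.

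For part~(a) I would set $r = p$, so that $\frac{r}{p} = 1$ and the second summand becomes simply $\frac{\epsilon}{\mu}\zeta_{n+1}$; bounding both $\zeta_n - \zeta_{n+1}$ and $\zeta_{n+1}$ by $\zeta_n$ then gives $D_F^{\mathrm{sym}}(u^{(n+1)}, u) \leq \left( p - 1 + \frac{\epsilon}{\mu} \right)\zeta_n$. It remains to iterate the contraction $\zeta_{n+1} \leq (1+\gamma)^{-1}\zeta_n$ from \cref{Cor:equiv}(a) to deduce $\zeta_n \leq (1+\gamma)^{-n}\zeta_0$, and to relabel $n \mapsto n-1$, which yields exactly the stated bound for $n \geq 1$.

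For parts~(b) and~(c) the mechanism is identical, the only change being which term dominates. When $r > p$ we have $\frac{r}{p} - 1 > 0$, hence $\zeta_{n+1}^{r/p} = \zeta_{n+1}\,\zeta_{n+1}^{r/p - 1} \leq \zeta_0^{r/p - 1}\zeta_n$; combined with $\zeta_n - \zeta_{n+1} \leq \zeta_n$ this gives $D_F^{\mathrm{sym}}(u^{(n+1)}, u) = O(\zeta_n)$, and since \cref{Cor:equiv}(b) provides superlinear convergence of $\{\zeta_n\}$, so does $\{D_F^{\mathrm{sym}}(u^{(n)}, u)\}$. When $r < p$ we instead have $0 < \frac{r}{p} < 1$, so $\zeta_n - \zeta_{n+1} \leq \zeta_n = \zeta_n^{1 - r/p}\zeta_n^{r/p} \leq \zeta_0^{1 - r/p}\zeta_n^{r/p}$ while $\zeta_{n+1}^{r/p} \leq \zeta_n^{r/p}$; hence $D_F^{\mathrm{sym}}(u^{(n+1)}, u) = O(\zeta_n^{r/p})$, and the sublinear rate of \cref{Cor:equiv}(c) finishes the argument.

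Since each step is an elementary manipulation of the \cref{Thm:primal} inequality, I do not expect a genuine obstacle. The one place that warrants a little care is the term $\zeta_n - \zeta_{n+1}$: the key point is to recognize that the crude bound $\zeta_n - \zeta_{n+1} \leq \zeta_n$, which follows just from nonnegativity and monotonicity of $\{\zeta_n\}$, is already sufficient, so that no finer estimate of the difference is needed. A secondary bookkeeping point is to keep track of which of $\zeta_n$ and $\zeta_n^{r/p}$ is the larger quantity when $\zeta_n \leq \zeta_0$---this is what switches at $r = p$ and determines the exponent in the bound for each of the three regimes---together with the harmless one-step index shift relating $D_F^{\mathrm{sym}}(u^{(n+1)}, u)$ to $D_F^{\mathrm{sym}}(u^{(n)}, u)$.
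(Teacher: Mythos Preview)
Your proposal is correct and is exactly the approach the paper has in mind: the paper gives no proof beyond the sentence ``We obtain \cref{Cor:primal} as a straightforward consequence of \cref{Thm:primal},'' and your argument supplies precisely those straightforward details---bounding $\zeta_n-\zeta_{n+1}\le\zeta_n$ and $\zeta_{n+1}\le\zeta_n$, comparing $\zeta_n$ with $\zeta_n^{r/p}$ according to the sign of $r-p$, and then invoking the matching rate from \cref{Cor:equiv}. The only cosmetic point is the one-step index shift you already flag: your bound is literally $D_F^{\mathrm{sym}}(u^{(n+1)},u)=O(\zeta_n)$ (resp.\ $O(\zeta_n^{r/p})$), which after relabeling reads $D_F^{\mathrm{sym}}(u^{(n)},u)=O(\zeta_{n-1})$; this is harmless for the stated qualitative conclusions (superlinear/sublinear convergence), and the paper's big-$O$ statements in (b) and (c) are evidently meant at that level of precision.
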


As all convergence results presented in this section are based on the assumption that the dual energy $E_{\mathrm{d}}$ is uniformly convex, for completeness, we discuss in which condition $E_{\mathrm{d}}$ is uniformly convex.
A necessary condition is that $B$ is surjective; otherwise, $B^t$ admits nontrivial kernel so that $E_{\mathrm{d}}$ is flat along some direction~(cf.~\cite{LP:2025b}).
Conversely, if $F$ is weakly smooth and $B$ is surjective, then we can ensure that $E_{\mathrm{d}}$ is uniformly convex, as summarized in \cref{Prop:E_d}. 

\begin{proposition}
\label{Prop:E_d}
In~\eqref{model_ALM} and~\eqref{dual_ALM}, suppose that $F$ is uniformly convex and $(p^*, L)$-weakly smooth  for some $p \geq 2$ and $L > 0$, and that $B$ is surjective.
Then $E_{\mathrm{d}}$ is $(p, \beta_B^p L^{-(p-1)})$-uniformly convex, where $\beta_B$ is defined by
\begin{equation*}
\beta_B = \inf_{\sigma \neq 0} \frac{\| B^t \sigma \|_*}{\| \sigma \|} > 0.
\end{equation*}
\end{proposition}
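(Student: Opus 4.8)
The plan is to reduce the statement to \cref{Prop:smooth} by means of a chain-rule identity for the Bregman distance under composition with a linear map, and then to exploit surjectivity of $B$ through a compactness argument. First I would record that, since $p \geq 2$, the weak-smoothness exponent $p^*$ satisfies $1 < p^* \leq 2$, so \cref{Prop:smooth} applies with $q = p^*$ (and $q^* = p$, $q^* - 1 = p-1$): it yields that $F^*$ is $(p, L^{-(p-1)})$-uniformly convex with respect to $\| \cdot \|_*$, i.e.
\begin{equation*}
D_{F^*}(\xi + \eta, \eta) \geq \frac{1}{p L^{p-1}} \| \xi \|_*^p, \quad \xi, \eta \in V.
\end{equation*}
Here I also use that uniform convexity together with weak smoothness makes $\nabla F$ a bijection (cf.~the argument in the proof of \cref{Prop:smooth}), so $F^*$ is differentiable and hence $E_{\mathrm{d}}$ is a genuine differentiable functional, making $D_{E_{\mathrm{d}}}$ meaningful.

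The next step is the identity $D_{E_{\mathrm{d}}}(\sigma, \tau) = D_{F^*}(-B^t \sigma, -B^t \tau)$ for all $\sigma, \tau \in W$. The linear term $(g, \sigma)$ in $E_{\mathrm{d}}$ does not affect the Bregman distance, so it suffices to treat $\sigma \mapsto F^*(-B^t \sigma)$, whose gradient is $-B \nabla F^*(-B^t \sigma)$. Expanding the definition of the Bregman distance and using $(-B x, y) = (x, -B^t y)$ for the cross term, the function-value terms and the cross term both collapse exactly to those of $D_{F^*}(-B^t\sigma, -B^t\tau)$. This is the one small computation to carry out carefully, but it is routine.

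Then, choosing $\xi = -B^t(\sigma - \tau)$ and $\eta = -B^t \tau$ in the uniform convexity inequality for $F^*$ and using homogeneity of the dual norm gives
\begin{equation*}
D_{E_{\mathrm{d}}}(\sigma, \tau) = D_{F^*}(-B^t\sigma, -B^t\tau) \geq \frac{1}{p L^{p-1}} \| B^t(\sigma - \tau) \|_*^p \geq \frac{\beta_B^p}{p L^{p-1}} \| \sigma - \tau \|^p,
\end{equation*}
where the last inequality follows from $\| B^t \zeta \|_* \geq \beta_B \| \zeta \|$ for all $\zeta \in W$, immediate from the definition of $\beta_B$. This is precisely $(p, \beta_B^p L^{-(p-1)})$-uniform convexity of $E_{\mathrm{d}}$. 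It remains to verify $\beta_B > 0$: since $B$ is surjective, $B^t$ is injective, so $\sigma \mapsto \| B^t \sigma \|_*$ is continuous and strictly positive on the compact unit sphere $\{ \| \sigma \| = 1 \}$ of the finite-dimensional space $W$, hence attains a positive minimum, which equals $\beta_B$. The argument has no real obstacle; the only points requiring care are lining up the conjugate exponents when invoking \cref{Prop:smooth} and tracking the signs in the chain-rule computation of $D_{E_{\mathrm{d}}}$.
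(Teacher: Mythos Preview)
Your argument is correct and follows essentially the same route as the paper: invoke \cref{Prop:smooth} to obtain $(p,L^{-(p-1)})$-uniform convexity of $F^*$ in the dual norm, use the identity $D_{E_{\mathrm{d}}}(\sigma,\tau)=D_{F^*}(-B^t\sigma,-B^t\tau)$, and bound $\|B^t(\sigma-\tau)\|_*$ from below by $\beta_B\|\sigma-\tau\|$. The paper simply asserts the Bregman identity and $\beta_B>0$ without justification, whereas you spell out the chain-rule computation, the differentiability of $F^*$, and the compactness argument on the unit sphere, but these are elaborations rather than a different strategy.
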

\begin{proof}
Note that $\beta_B > 0$ follows from the surjectivity of $B$.
Since $F$ is uniformly convex and $(p^*, L)$-weakly smooth, it follows from \cref{Prop:smooth} that $F^*$ is $(p, L^{-(p-1)})$-uniformly convex with respect to the dual norm $\| \cdot \|_*$.
For any $\sigma, \nu \in W$ with $\sigma \neq \nu$, we have
\begin{equation*}
    D_{E_{\mathrm{d}}} (\sigma, \nu)
    = D_{F^*} (-B^t \sigma, -B^t \nu)
    \geq \frac{1}{p L^{p-1}} \| B^t (\sigma - \nu) \|_*^p
    \geq \frac{\beta_B^p}{p L^{p-1}} \| \sigma - \nu \|^p,
\end{equation*}
which completes the proof.
\end{proof}

\begin{remark}
\label{Rem:E_d}
While the estimate for the uniform convexity constant of $E_{\mathrm{d}}$ in \cref{Prop:E_d} is sufficient for our purposes, we note that it is not sharp; for the special case of linear saddle point problems, compare the estimates in Theorem~4.1 and Corollary~4.2 of~\cite{Park:2025}.
\end{remark}

\section{Computational aspects}
\label{Sec:Computation}
In this section, we discuss several computational issues of the proposed high-order augmented Lagrangian method.
Since the method can achieve arbitrarily fast convergence rates when the order $r$ and the penalty parameter $\epsilon$ are chosen appropriately, the number of iterations required to reach a prescribed accuracy is typically very small, often only one or two; see also~\cite[Section~5.1]{PH:2025} for related numerical results.
Therefore, it suffices to analyze the computational cost of a single iteration, which consists of solving the primal and dual subproblems.

\subsection{Numerical stability of dual subproblems}
As studied in \cref{Sec:ALM}, the convergence rate of the high-order augmented Lagrangian method can be made arbitrarily fast by choosing a large order $r$ and a small penalty parameter $\epsilon$.
However, such a choice may lead to numerical instability.
Indeed, in the dual update formula~\eqref{dual_update_unstable}, the term $\epsilon^{-(r^* - 1)}$ can become numerically unstable when $r$ is large and $\epsilon$ is small.
In this regime, $r^* - 1$ approaches zero, and the mapping $t \mapsto t^{-(r^* - 1)}$ develops a sharp cusp near $t = 0$, which can cause numerical instability when $\epsilon$ is close to zero.

Fortunately, there exists a more numerically stable alternative to the dual update formula~\eqref{dual_update_unstable}, which can be employed even for large $r$ and small $\epsilon$, at the cost of solving a certain linear system.
To derive this alternative formulation, we begin by rewriting~\eqref{dual_update_unstable} as
\begin{equation}
\label{dual_update_stable_1}
    \lambda^{(n+1)} = \lambda^{(n)} + \epsilon^{- (r^* - 1)} J_{r^*} ( B u^{(n+1)} - g),
\end{equation}
where $J_{r^*}$ denotes the duality map with respect to the dual norm $\| \cdot \|_*$.
Meanwhile, the optimality condition for $u^{(n+1)}$ reads
\begin{equation*}
    \nabla F(u^{(n+1)}) + B^t \lambda^{(n)} + \epsilon^{- (r^* - 1)} B^t J_{r^*} (B u^{(n+1)} - g) = 0.
\end{equation*}
Assuming that $B$ is surjective, we have
\begin{equation}
\label{dual_update_stable_2}
    J_{r^*} (B u^{(n+1)} - g)
    = - \epsilon^{r^* - 1} (B B^t )^{-1} B ( \nabla F ( u^{(n+1)} ) + B^t \lambda^{(n)} ).
\end{equation}
Combining~\eqref{dual_update_stable_1} and~\eqref{dual_update_stable_2}, it follows that
\begin{equation}
\label{dual_update_stable}
    \lambda^{(n+1)}
    = - (B B^t)^{-1} B \nabla F(u^{(n+1)}).
\end{equation}
This formulation avoids the explicit computation of the factor $\epsilon^{r^* - 1}$ and is therefore free from numerical instability even when $r$ is large and $\epsilon$ is small.
The formulation~\eqref{dual_update_stable} is particularly useful when linear systems involving $B B^t$ can be solved efficiently.

\begin{figure}
\centering \includegraphics[width=\textwidth]{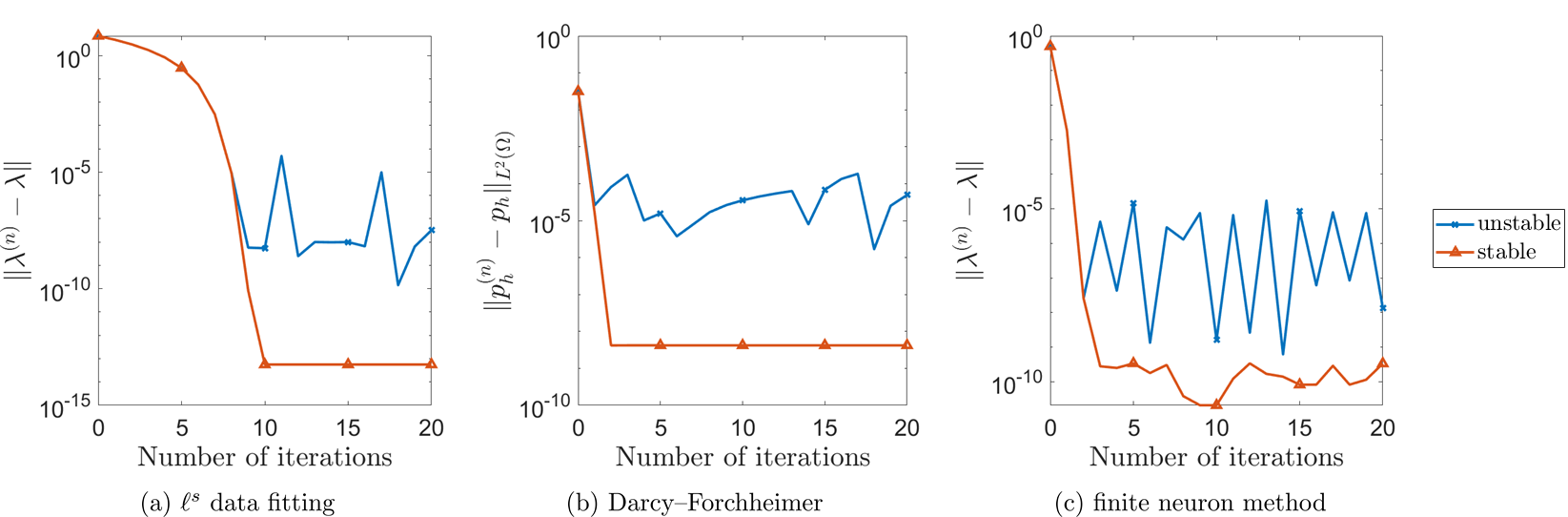}
\caption{Norm error $\| \lambda^{(n)} - \lambda \|$ of the high-order augmented Lagrangian method (\cref{Alg:ALM}) for solving the three applications discussed in \cref{Sec:Applications} ($r = 3$, $\epsilon = 10^{-2}$).
The legends ``unstable'' and ``stable'' indicate that the dual updates are performed using~\eqref{dual_update_unstable} and~\eqref{dual_update_stable}, respectively.
}
\label{Fig:unstable}
\end{figure}

To illustrate the effect of the numerically stable formulation~\eqref{dual_update_stable}, we present in \cref{Fig:unstable} the norm error $\| \lambda^{(n)} - \lambda \|$ of the dual variable along the iterations of the high-order augmented Lagrangian method.
The dual update is performed using either~\eqref{dual_update_unstable} or~\eqref{dual_update_stable}; see \cref{Sec:Applications} for further details of the numerical experiments corresponding to \cref{Fig:unstable}.
While the two approaches exhibit similar behavior in the initial iterations, the method using~\eqref{dual_update_unstable} becomes unstable after a certain point.
In contrast, the error produced by the method using~\eqref{dual_update_stable} continues to decrease, indicating that~\eqref{dual_update_stable} is significantly more numerically stable than~\eqref{dual_update_unstable}.

\subsection{Nearly semicoercive primal subproblems}
Next, we turn our attention to the primal subproblem in \cref{Alg:ALM}, which is another critical component of the overall algorithm.
For small- or medium-scale applications, the primal subproblem can be solved efficiently using superlinearly convergent second-order methods, such as the Newton method (see, e.g.,~\cite{BV:2004}).
However, for large-scale applications, this becomes more subtle, since solving the associated Hessian systems is generally computationally expensive.

In~\eqref{model_ALM}, because the operator $B$ typically has a nontrivial kernel, the $u^{(n+1)}$-subproblem in \cref{Alg:ALM} becomes nearly semicoercive as $\epsilon \to 0$.
More precisely, the semicoercive term involving the dual norm of $Bv - g$, which is flat along the kernel of $B$, becomes dominant as $\epsilon \to 0$; see~\cite{LP:2025b} for a rigorous mathematical definition of near semicoercivity.
In the special case where $F$ is a quadratic functional, the problem becomes nearly singular~\cite{LWXZ:2007}, causing the condition number to blow up, that is, to grow rapidly as $\epsilon$ tends to zero, and leading standard iterative solvers to perform poorly.
Therefore, $\epsilon$-robust numerical strategies for solving the primal subproblems, especially in large-scale settings, are essential for maintaining the computational efficiency of the proposed high-order augmented Lagrangian method.

In~\cite{LWXZ:2007}, $\epsilon$-robust subspace correction methods~\cite{Xu:1992} were studied for nearly singular linear systems arising from augmented Lagrangian formulations of linear saddle point problems.
These methods provide a unified framework for the design and analysis of many iterative numerical methods, including multigrid and domain decomposition methods.
Subspace correction methods are based on a stable space decomposition
\begin{equation*}
    V = \sum_{j=1}^J V_j,
\end{equation*}
where each $V_j$ is a subspace of $V$, and they converge robustly with respect to $\epsilon$ provided that the decomposition satisfies
\begin{equation}
\label{kernel_decomposition}
    \mathcal{N} = \sum_{j=1}^J (V_j \cap \mathcal{N}),
    \quad \text{where} \quad
    \mathcal{N} = \operatorname{ker} B.
\end{equation}
In particular, domain decomposition and multigrid methods designed to respect the kernel decomposition~\eqref{kernel_decomposition} exhibit uniform convergence independent of $\epsilon$; see~\cite{LWC:2009,WZ:2014} for concrete examples.

More recently, the theory of $\epsilon$-robust subspace correction methods has been extended to nearly semicoercive convex variational problems in~\cite{LP:2025b}, where near semicoercivity of convex problems directly generalizes the notion of near singularity in linear systems.
This generalization holds under additional but mild assumptions on the solution spaces and problem settings, which are typically satisfied for a broad class of PDE models.
One may refer to~\cite{PH:2025} for an illustrative application to the Darcy--Forchheimer flow, a nonlinear system of PDEs describing fluid flow in porous media.

We remark that the design of $\epsilon$-robust subspace correction methods discussed above must be carried out on an application-by-application basis, since the construction of a stable space decomposition~\eqref{kernel_decomposition} is problem dependent.
That is, different applications may require different strategies for designing suitable space decompositions.
Consequently, to enable broad applicability of the proposed high-order augmented Lagrangian method, it is essential to study $\epsilon$-robust subspace correction methods for a wide range of nearly semicoercive problems.

\section{Applications}
\label{Sec:Applications}
In this section, we present various applications of the proposed high-order augmented Lagrangian method to problems arising in computational mathematics, together with relevant numerical results.
More precisely, we consider three applications: $\ell^s$ data fitting under linear constraints, the Darcy--Forchheimer model, and the finite neuron method.

All numerical experiments were conducted using MATLAB R2024b on a desktop equipped with an AMD Ryzen 5 5600X CPU~(3.7GHz, 6 cores), 40GB RAM, and the Windows 10 Pro operating system.

In all numerical experiments involving the high-order augmented Lagrangian method~(\cref{Alg:ALM}), we used zero initial guesses; that is, we set $u^{(0)} = 0$ and $\lambda^{(0)} = 0$.
The $u^{(n+1)}$-subproblems were solved using the damped Newton method~\cite{BV:2004} with a sufficiently strict stopping criterion.
For each problem, a reference solution was obtained by performing $10^3$ iterations of \cref{Alg:ALM} with $r = 2$ and $\epsilon = 1$.
In this case, \cref{Alg:ALM} reduces to the conventional augmented Lagrangian method~\eqref{ALM_intro}, whose convergence properties are well established~\cite{Bertsekas:1999,Rockafellar:2023}.

\subsection{\texorpdfstring{$\ell^s$}{ls} data fitting under linear constraints}
As a first application, we consider the following $\ell^s$ data fitting problem with linear constraints:
\begin{equation}
\label{data_fitting}
    \min_{v \in \mathbb{R}^n} \left\{ F(v) := \frac{1}{s}\| A v - f \|_{\ell^s}^s \right\}
    \quad \text{subject to} \quad
    B v = g,
\end{equation}
where $s > 1$, and $A$, $B$, $f$, and $g$ are conforming matrices and vectors of appropriate dimensions.
We further assume the following:
\begin{enumerate}[(i)]
\item $B$ is surjective.
\item $\mathcal{N}(A) \cap \mathcal{N}(B) = \{0\}$.
\end{enumerate}
Under these assumptions, one can verify that~\eqref{data_fitting} admits a unique solution.
Moreover, since $F$ is $\min \{ s, 2 \}$-weakly smooth (see, e.g.,~\cite[Lemma~2.2]{BL:1994}), it follows from \cref{Prop:E_d} that $E_{\mathrm{d}}$ defined in~\eqref{dual_ALM} is $p$-uniformly convex with
\begin{equation}
\label{p_data_fitting}
p = \max \{ s^*, 2 \}.
\end{equation}
Hence, by \cref{Cor:equiv,Cor:primal}, the high-order augmented Lagrangian method is expected to achieve linear convergence if $r = p$, superlinear convergence if $r > p$, and sublinear convergence if $r < p$.


\begin{figure}
\centering \includegraphics[width=\textwidth]{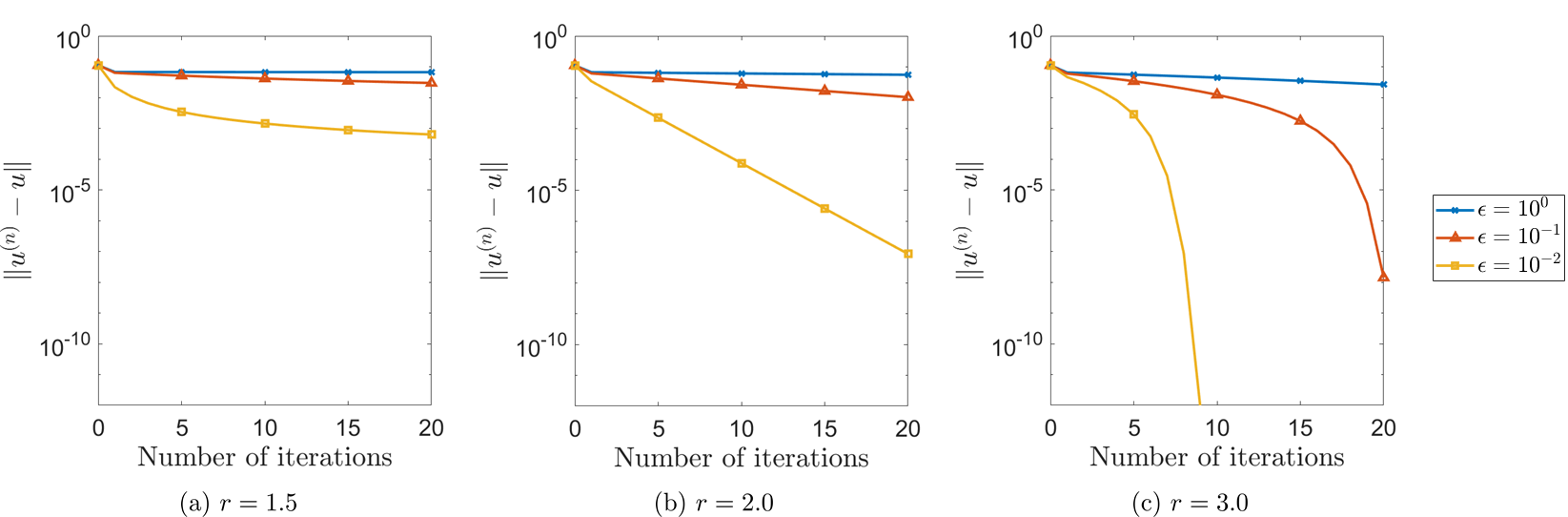}
\caption{Norm error $\| u^{(n)} - u \|$ of the high-order augmented Lagrangian method~(\cref{Alg:ALM}) for solving the constrained $\ell^s$ location problem~\eqref{location} with $s = 3.0$.}
\label{Fig:fitting_3.0}
\end{figure}

\begin{figure}
\centering \includegraphics[width=\textwidth]{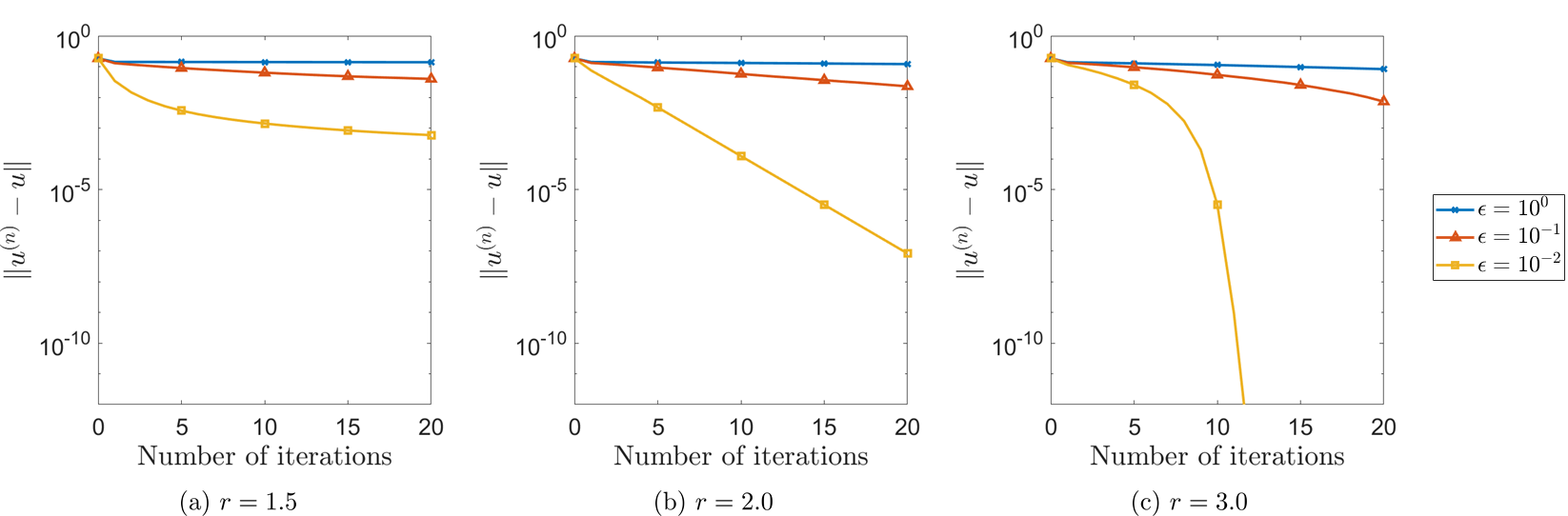}
\caption{Norm error $\| u^{(n)} - u \|$ of the high-order augmented Lagrangian method~(\cref{Alg:ALM}) for solving the constrained $\ell^s$ location problem~\eqref{location} with $s = 1.5$.}
\label{Fig:fitting_1.5}
\end{figure}

To illustrate the convergence behavior numerically, we consider the following special case of~\eqref{data_fitting}, namely the constrained $\ell^s$ location problem:
\begin{equation}
\label{location}
    \min_{v \in \mathbb{R}^n} \frac{1}{s} \sum_{j=1}^J \| v - a_j \|_{\ell^s}^s
    \quad \text{subject to} \quad
    B v = g,
\end{equation}
which corresponds to~\eqref{data_fitting} with
\begin{equation*}
    A = \begin{bmatrix} I \\ \vdots \\ I \end{bmatrix} \in \mathbb{R}^{nJ \times n}, \quad
    f = \begin{bmatrix} a_1 \\ \vdots \\ a_J \end{bmatrix} \in \mathbb{R}^{nJ}.
\end{equation*}

In the numerical experiments for~\eqref{location}, we set $n = 10$ and $J = 100$.
Each coordinate of $a_j$ is generated independently from the uniform distribution on $[-1, 1]$.
The operator $Bv$ extracts the first coordinate of $v$, and we set $g = 0$.

The norm error $\| u^{(n)} - u \|$ of the high-order augmented Lagrangian method applied to~\eqref{location} with $s = 3.0$ and $s = 1.5$ is shown in \cref{Fig:fitting_3.0,Fig:fitting_1.5}, respectively.
Overall, we observe that smaller values of $\epsilon$ lead to faster convergence rates, which is consistent with the convergence theory developed in this paper.

In the case $s = 3.0$, we have $p = 2$, where $p$ is given in~\eqref{p_data_fitting}.
Hence, the theory predicts that the high-order augmented Lagrangian method achieves linear convergence if $r = 2$, superlinear convergence if $r > 2$, and sublinear convergence if $r < 2$.
Indeed, this behavior is confirmed by the numerical results shown in \cref{Fig:fitting_3.0}.

The case $s = 1.5$ is more interesting.
Here, our theory predicts that the high-order augmented Lagrangian method achieves linear or faster convergence only if $r \geq p = 3.0$, where $p$ is given in~\eqref{p_data_fitting}.
However, the results in \cref{Fig:fitting_1.5} indicate that the method already achieves linear convergence when $r = 2.0$, which is better than predicted by the theory.
This observation suggests that the current theoretical analysis is not sharp and leaves room for further improvement, which we defer to future work.

\subsection{Darcy--Forchheimer model}
The second application we consider is the Darcy--Forchheimer model, which characterizes a nonlinear relationship between the flow rate and the pressure gradient in porous media~\cite{SMH:2023}.
Given a bounded domain $\Omega \subset \mathbb{R}^d$, the steady-state Darcy--Forchheimer model consists of three equations: a nonlinear relationship between the Darcy velocity $\boldsymbol{u}$ and the pressure $p$, conservation of mass, and a boundary condition:
\begin{equation}
\label{Forchheimer}
\begin{aligned}
\frac{\mu}{p} \boldsymbol{K}^{-1} \boldsymbol{u} + \frac{\beta}{p} | \boldsymbol{u} | \boldsymbol{u} + \nabla p = \boldsymbol{f} \quad &\text{ in } \Omega, \\
\operatorname{div} \boldsymbol{u} = g \quad &\text{ in } \Omega, \\
p = 0 \quad &\text{ on } \partial \Omega,
\end{aligned}
\end{equation}
where $\boldsymbol{K}$ is the permeability tensor, $\rho$ is the fluid density, $\mu$ is the fluid viscosity, $\beta$ is the Forchheimer coefficient, $\boldsymbol{f}$ is the external body force per unit volume, and $g$ is a prescribed source or sink term for mass.

The finite element formulation of~\eqref{Forchheimer} using a Raviart--Thomas-type conforming mixed finite element space~\cite{BBF:2013} $X_h \times M_h \subset X \times M$, where
\begin{equation*}
    X = \{ \boldsymbol{v} \in L^3 (\Omega)^d : \operatorname{div} \boldsymbol{v} \in L^2 (\Omega) \}, \quad 
    M = L^2 (\Omega),
\end{equation*}
is given as follows: find $(\boldsymbol{u}_h, p_h) \in X_h \times M_h$ such that
\begin{equation}
\label{Forchheimer_FEM}
\begin{aligned}
\frac{\mu}{\rho} \int_{\Omega} \boldsymbol{K}^{-1} \boldsymbol{u}_h \cdot \boldsymbol{v} \,dx
+ \frac{\beta}{\rho} \int_{\Omega} | \boldsymbol{u}_h | \boldsymbol{u}_h \cdot \boldsymbol{v} \,dx
- \int_{\Omega} p_h \operatorname{div} \boldsymbol{v} \,dx
&= \int_{\Omega} \boldsymbol{f} \cdot \boldsymbol{v} \,dx,
\quad \boldsymbol{v} \in X_h, \\
\int_{\Omega} q \operatorname{div} \boldsymbol{u}_h \,dx
&= \int_{\Omega} g q \,dx,
\quad q \in M_h.
\end{aligned}
\end{equation}
One may refer to~\cite{PR:2012} for further details.

In~\cite{PH:2025}, the following convex minimization problem with a divergence constraint, which is equivalent to~\eqref{Forchheimer_FEM}, is considered:
\begin{equation}
\label{Forchheimer_opt}
\resizebox{\textwidth}{!}{$\displaystyle
    \min_{v \in X_h} \left\{
    F (\boldsymbol{v}) := \frac{\mu}{2 \rho} \int_{\Omega} \boldsymbol{K}^{-1} | \boldsymbol{v} |^2 \,dx
    + \frac{\beta}{3 \rho} \int_{\Omega} | \boldsymbol{v} |^3 \,dx
    - \int_{\Omega} \boldsymbol{f} \cdot \boldsymbol{v} \,dx
    \right\}
    \text{ subject to } \operatorname{div} \boldsymbol{v} = g_h,
$}
\end{equation}
where $g_h$ is the $L^2 (\Omega)$-orthogonal projection of $g$ onto $M_h$.

Here, we consider the high-order augmented Lagrangian method applied to the discrete Darcy--Forchheimer model~\eqref{Forchheimer_opt}.
We equip the pressure space $M_h$ with the $L^2(\Omega)$-inner product and the $L^r(\Omega)$-norm.
Then, by setting
\begin{equation*}
    B \leftarrow - \operatorname{div}, \quad
    g \leftarrow - g_h,
\end{equation*}
\cref{Alg:ALM} equipped with the stable dual update~\eqref{dual_update_stable} can be written as follows:
\begin{equation}
\label{Forchheimer_ALM}
\begin{aligned}
\boldsymbol{u}_h^{(n+1)} &=
\operatornamewithlimits{\arg\min}_{\boldsymbol{v} \in X_h}
\left\{
F (\boldsymbol{v})
- \int_{\Omega} p_h^{(n)} \operatorname{div} \boldsymbol{v} \,dx
+ \frac{1}{r^* \epsilon^{r^* - 1}} \int_{\Omega} | \operatorname{div} \boldsymbol{v} - g_h |^{r^*} \,dx
\right\}, \\
p_h^{(n+1)} &= (\operatorname{div} \operatorname{div}^*)^{-1} \operatorname{div} \nabla F (\boldsymbol{u}_h^{(n+1)}) ,
\end{aligned}
\end{equation}
where $\operatorname{div}^*$ denotes the $L^2(\Omega)$-adjoint of $\operatorname{div}$, and $\nabla$ denotes the gradient with respect to the $L^2(\Omega)$-inner product.
Note that~\eqref{Forchheimer_ALM} reduces to~\cite[Algorithm~3.1]{PH:2025} when $r = 2$.
In the dual update, the linear operator $\operatorname{div} \operatorname{div}^*$ is a discrete Laplacian-like operator and can be solved efficiently using conventional multilevel approaches~\cite{RVW:1996,Xu:2010}.

\begin{figure}
\centering \includegraphics[width=\textwidth]{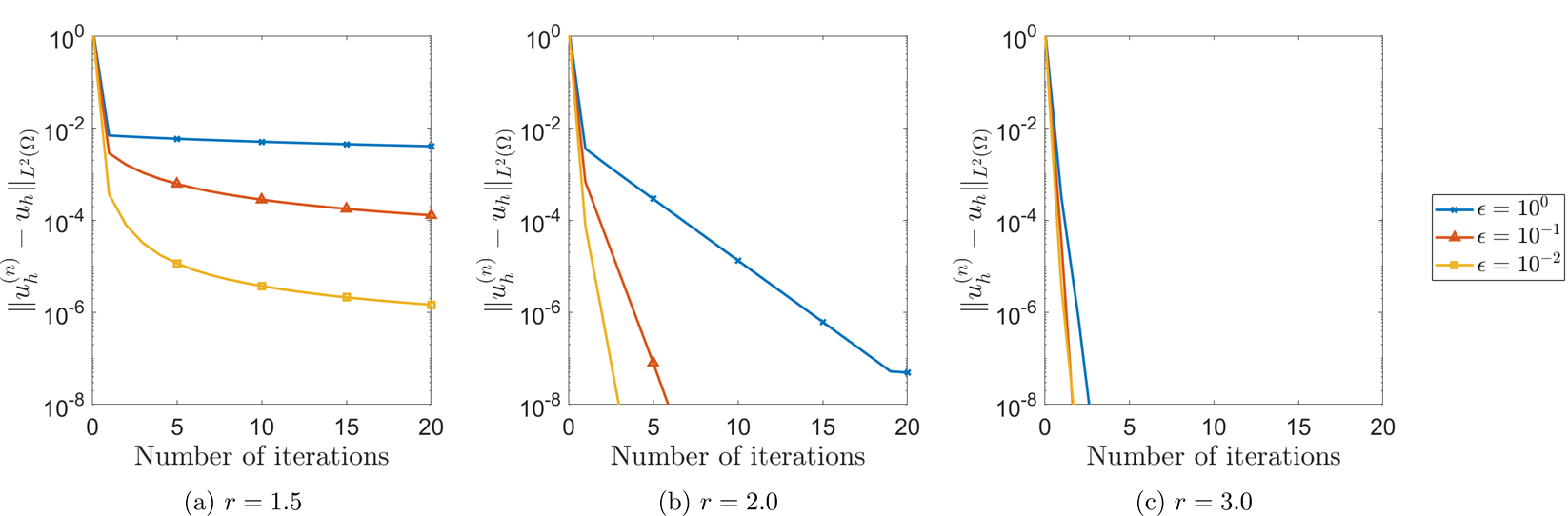}
\caption{Velocity $L^2$-norm error $\| \boldsymbol{u}_h^{(n)} - \boldsymbol{u}_h \|_{L^2 (\Omega)}$ of the high-order augmented Lagrangian method~\eqref{Forchheimer_ALM} for solving the Darcy--Forchheimer model~\eqref{Forchheimer_opt}.}
\label{Fig:Forchheimer_u}
\end{figure}

\begin{figure}
\centering \includegraphics[width=\textwidth]{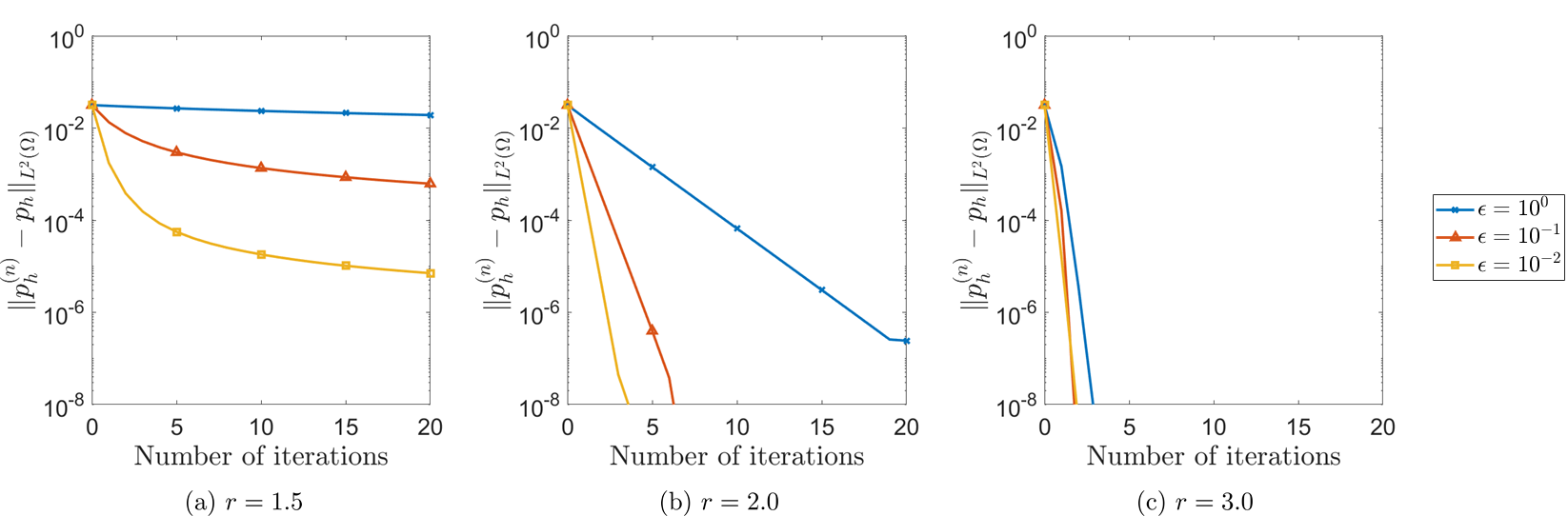}
\caption{Pressure $L^2$-norm error $\| p_h^{(n)} - p_h \|_{L^2 (\Omega)}$ of the high-order augmented Lagrangian method~\eqref{Forchheimer_ALM} for solving the Darcy--Forchheimer model~\eqref{Forchheimer_opt}.}
\label{Fig:Forchheimer_p}
\end{figure}

In the numerical experiments, we use the setting from~\cite[Example~5.1]{PH:2025}.
Specifically, we set $\Omega = (0,1)^2 \subset \mathbb{R}^2$ and assume that $\boldsymbol{K} = K \boldsymbol{I}$ for some positive scalar $K$.
We take $\mu = 1$, $\rho = 1$, $K = 1$, and $\beta = 10$.
The functions $\boldsymbol{f}$ and $g$, together with the corresponding exact solutions $\boldsymbol{u}$ and $p$ of~\eqref{Forchheimer}, are given by
\begin{align*}
\boldsymbol{f} (x,y) &= \left(\frac{\mu}{\rho K} + \frac{\beta}{\rho} e^x \right)(e^x \sin y, e^x \cos y)^t + (y(1-2x)(1-y), x(1-x)(1 - 2y))^t, \\
g (x,y) &= 0, \quad
\boldsymbol{u} (x,y) = ( e^x \sin y, e^x \cos y)^t, \quad
p (x,y) = xy(1-x)(1-y).
\end{align*}
We choose $X_h \times M_h$ as the lowest-order Raviart--Thomas finite element space on a uniform $2^4 \times 2^4$ rectangular grid.

In the Darcy--Forchheimer model, both the velocity and the pressure are important physical quantities.
Accordingly, we present numerical results for both variables, as shown in \cref{Fig:Forchheimer_u,Fig:Forchheimer_p}.
Recalling that the energy functional $F$ is smooth~\cite{PH:2025}, we observe convergence behavior consistent with our theoretical predictions: linear convergence for $r = 2$, superlinear convergence for $r > 2$, and sublinear convergence for $r < 2$.
Moreover, smaller values of $\epsilon$ lead to faster convergence rates.

\subsection{Finite neuron method}
As a final application, we consider the finite neuron method~\cite{Xu:2020}, also known as the deep Ritz method~\cite{EY:2018}, which is a scientific machine learning approach that employs neural networks to solve PDEs.
With the rapid growth and increasing importance of scientific machine learning, there have been numerous efforts to design efficient numerical solvers for the finite neuron method; see, for example,~\cite{PXX:2025,SHJHX:2023}.

As a model problem, we consider the following one-dimensional $s$-Laplacian equation ($s > 1$) on the interval $\Omega = (0,1) \subset \mathbb{R}$:
\begin{equation}
\label{sLap}
    - ( |u'|^{s-2} u')' = f \quad \text{ in } (0,1), \qquad u(0) = u(1) = 0.
\end{equation}
It is well known (see, e.g.,~\cite{LP:2025a}) that~\eqref{sLap} admits the following variational formulation:
\begin{equation}
\label{sLap_opt}
    \min_{v \in W_0^{1, s} (\Omega)} \left\{ F(v) := \frac{1}{s} \int_0^1 | v' |^s \,dx - \int_0^1 f v \,dx \right\}.
\end{equation}

In the finite neuron method~\cite{EY:2018,Xu:2020}, we discretize the variational formulation~\eqref{sLap_opt} using a neural network.
Here, we consider the following discrete space generated by a linearized ReLU shallow neural network~\cite{LMX:2025}:
\begin{equation}
\label{FNM_space}
    V_N = \operatorname{span} \left\{ \operatorname{ReLU} \left( x - \frac{i-1}{N} \right) : 1 \leq i \leq N \right\},
\end{equation}
where $\operatorname{ReLU}(x) = \max \{ x, 0 \}$ for $x \in \mathbb{R}$.
Note that $V_N$ satisfies the homogeneous Dirichlet boundary condition at $x = 0$ but not at $x = 1$.
By replacing the solution space of~\eqref{sLap_opt} with $V_N$ and enforcing the homogeneous Dirichlet boundary condition at $x = 1$ explicitly, we obtain the following finite neuron discretization of~\eqref{sLap_opt}:
\begin{equation}
\label{sLap_FNM}
    \min_{v \in V_N} F(v)
    \quad \text{subject to} \quad
    v(1) = 0.
\end{equation}
We denote the solution of~\eqref{sLap_FNM} by $u_N \in V_N$.

In the design of numerical solvers for the finite neuron method and related scientific machine learning approaches, such as physics-informed neural networks~\cite{RPK:2019}, enforcing Dirichlet boundary conditions explicitly is a nontrivial task and has been actively studied; see, for example,~\cite{SS:2022,WMIK:2023,YZ:2025}.
Here, we address this issue by solving~\eqref{sLap_FNM} using the proposed high-order augmented Lagrangian method.

\begin{figure}
\centering \includegraphics[width=\textwidth]{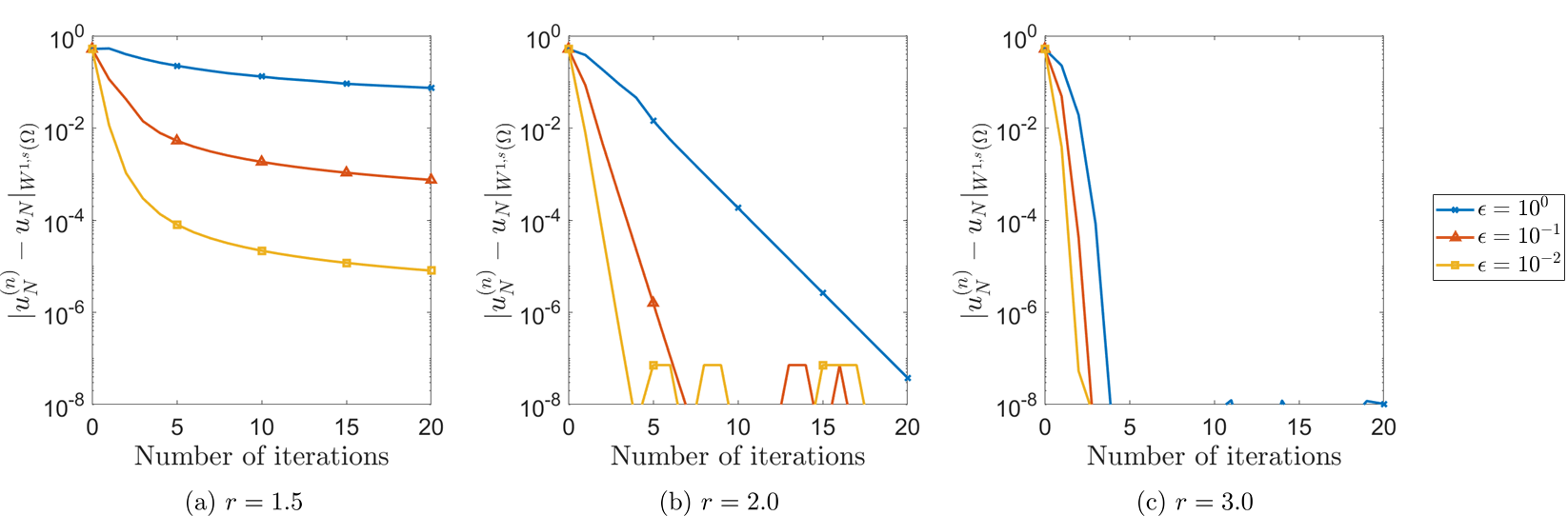}
\caption{$W^{1,s}$ error $| u_N^{(n)} - u_N |_{W^{1,s}}$ of the high-order augmented Lagrangian method~(\cref{Alg:ALM}) for solving the finite neuron discretization~\eqref{sLap_FNM} with $s = 3.0$.}
\label{Fig:FNM_3.0}
\end{figure}

\begin{figure}
\centering \includegraphics[width=\textwidth]{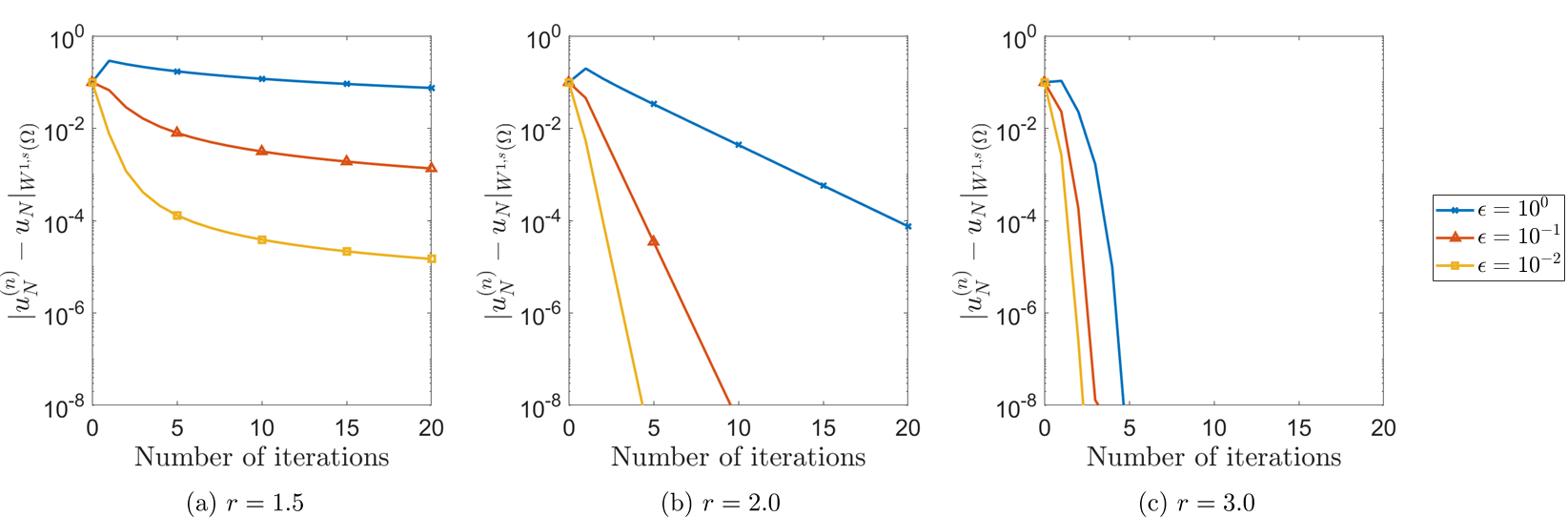}
\caption{$W^{1,s}$-norm error $| u_N^{(n)} - u_N |_{W^{1,s}}$ of the high-order augmented Lagrangian method~(\cref{Alg:ALM}) for solving the finite neuron discretization~\eqref{sLap_FNM} with $s = 1.5$.}
\label{Fig:FNM_1.5}
\end{figure}

In the numerical experiments, we set $f = 1$, so that the exact solution of~\eqref{sLap} admits the following explicit formula:
\begin{equation*}
    u(x) = \frac{1}{s^*} \left[ \left( \frac{1}{2} \right)^{s^*} - \left| x - \frac{1}{2} \right|^{s^*} \right].
\end{equation*}
We use $N = 64$ neurons in~\eqref{FNM_space}.

The $W^{1,s}$-seminorm error $| u_N^{(n)} - u_N |_{W^{1,s} (\Omega)}$, which is an important measure for the $s$-Laplacian problem~\eqref{sLap}, is plotted over the iterations of the proposed high-order augmented Lagrangian method and shown in \cref{Fig:FNM_3.0,Fig:FNM_1.5} for $s = 3.0$ and $s = 1.5$, respectively.
We observe behavior similar to that in the $\ell^s$ data fitting problem~\eqref{data_fitting}.
Specifically, for both $s = 3.0$ and $s = 1.5$, we observe linear convergence when $r = 2$, superlinear convergence when $r > 2$, and sublinear convergence when $r < 2$.
Moreover, smaller values of $\epsilon$ lead to faster convergence rates, which is fully consistent with our theoretical results.

\section{Conclusion}
\label{Sec:Conclusion}
In this paper, we proposed a high-order augmented Lagrangian method for solving convex optimization problems with linear constraints, achieving arbitrarily fast convergence rates.
More precisely, we showed that the proposed method attains arbitrarily fast linear convergence for sufficiently small $\epsilon$ and a suitable choice of the order $r$, and even superlinear convergence for larger values of $r$.
We also discussed computational aspects of the proposed method, addressing numerical stability and computational efficiency.
The fast convergence of the proposed method was validated through various applications arising in the sciences, including data fitting, flow in porous media, and scientific machine learning.

To conclude the paper, we discuss two possible directions for future research, one theoretical and one application oriented.
On the theoretical side, the numerical results presented in \cref{Sec:Applications} indicate that the proposed high-order augmented Lagrangian method sometimes exhibits better performance than predicted by our theoretical analysis.
More precisely, in some cases where the energy functional is $p$-uniformly convex with $p < 2$, the proposed method demonstrates linear convergence when the order $r$ is set to $2$, whereas our theory predicts linear convergence only when $r = p^* > 2$.
These numerical observations suggest that a sharper theoretical framework may exist to explain this improved behavior.

We note that a similar phenomenon was recently established in~\cite{LP:2025a}, where linear convergence of domain decomposition methods for solving the $s$-Laplacian was proved, despite the previously known convergence rate being only sublinear~\cite{LP:2025b,TX:2002}.
We expect that a similar type of analysis may be developed to explain the enhanced convergence behavior of the proposed method observed in our numerical experiments.

The second direction, on the application side, is the design of $\epsilon$-robust numerical methods for various nearly semicoercive convex optimization problems arising in the sciences.
As discussed in \cref{Sec:Computation}, the primal subproblems of the proposed high-order augmented Lagrangian method are nearly semicoercive, and tailored numerical methods are required to ensure computational efficiency, especially for large-scale applications.
We expect that the recently developed framework of subspace correction methods for nearly semicoercive problems~\cite{LP:2025b} can be a powerful tool for this purpose.
Indeed, in~\cite{PH:2025}, robust multilevel methods for solving a nearly semicoercive formulation of the Darcy--Forchheimer model were developed based on the framework of~\cite{LP:2025b}.
Robust solvers for a wide range of semicoercive problems are essential for broadening the applicability of the proposed method.

\bibliographystyle{siamplain}
\bibliography{refs_augmented_Lagrangian}

\begin{thebibliography}{10}

\bibitem{AN:2024}
{\sc M.~Ahookhosh and Y.~Nesterov}, {\em High-order methods beyond the
  classical complexity bounds: inexact high-order proximal-point methods},
  Math. Program., 208 (2024), pp.~365--407.

\bibitem{BL:1994}
{\sc J.~W. Barrett and W.~B. Liu}, {\em Finite element approximation of the
  parabolic $p$-{L}aplacian}, SIAM J. Numer. Anal., 31 (1994), pp.~413--428.

\bibitem{Bertsekas:1999}
{\sc D.~P. Bertsekas}, {\em Nonlinear Programming}, Athena Scientific, Belmont,
  MA, second~ed., 1999.

\bibitem{BBF:2013}
{\sc D.~Boffi, F.~Brezzi, and M.~Fortin}, {\em Mixed Finite Element Methods and
  Applications}, Springer, Heidelberg, 2013.

\bibitem{BV:2004}
{\sc S.~Boyd and L.~Vandenberghe}, {\em Convex Optimization}, Cambridge
  University Press, Cambridge, 2004.

\bibitem{CP:2016}
{\sc A.~Chambolle and T.~Pock}, {\em An introduction to continuous optimization
  for imaging}, Acta Numer., 25 (2016), pp.~161--319.

\bibitem{EY:2018}
{\sc W.~E and B.~Yu}, {\em The deep {R}itz method: a deep learning-based
  numerical algorithm for solving variational problems}, Commun. Math. Stat., 6
  (2018), pp.~1--12.

\bibitem{GR:2008}
{\sc R.~Goebel and R.~T. Rockafellar}, {\em Local strong convexity and local
  {L}ipschitz continuity of the gradient of convex functions}, J. Convex Anal.,
  15 (2008), p.~263.

\bibitem{Hestenes:1969}
{\sc M.~R. Hestenes}, {\em Multiplier and gradient methods}, J. Optim. Theory
  Appl., 4 (1969), pp.~303--320.

\bibitem{JPX:2025}
{\sc B.~Jiang, J.~Park, and J.~Xu}, {\em Connections between convex
  optimization algorithms and subspace correction methods}, arXiv preprint
  arXiv:2505.09765,  (2025).

\bibitem{KS:2019}
{\sc C.~Kanzow and D.~Steck}, {\em Quasi-variational inequalities in {B}anach
  spaces: {T}heory and augmented {L}agrangian methods}, SIAM J. Optim., 29
  (2019), pp.~3174--3200.

\bibitem{KSW:2018}
{\sc C.~Kanzow, D.~Steck, and D.~Wachsmuth}, {\em An augmented {L}agrangian
  method for optimization problems in {B}anach spaces}, SIAM J. Control Optim.,
  56 (2018), pp.~272--291.

\bibitem{Kim:2021}
{\sc D.~Kim}, {\em Accelerated proximal point method for maximally monotone
  operators}, Math. Program., 190 (2021), pp.~57--87.

\bibitem{LP:2025a}
{\sc Y.-J. Lee and J.~Park}, {\em On the linear convergence of additive
  {S}chwarz methods for the $p$-{L}aplacian}, IMA J. Numer. Anal., 45 (2025),
  pp.~2655--2684.

\bibitem{LP:2025b}
{\sc Y.-J. Lee and J.~Park}, {\em Parallel subspace correction methods for
  semicoercive and nearly semicoercive convex optimization with applications to
  nonlinear {PDE}s}, Math. Comp.,  (2025),
  \url{https://doi.org/10.1090/mcom/4172}.

\bibitem{LWC:2009}
{\sc Y.-J. Lee, J.~Wu, and J.~Chen}, {\em Robust multigrid method for the
  planar linear elasticity problems}, Numer. Math., 113 (2009), pp.~473--496.

\bibitem{LWXZ:2007}
{\sc Y.-J. Lee, J.~Wu, J.~Xu, and L.~Zikatanov}, {\em Robust subspace
  correction methods for nearly singular systems}, Math. Models Methods Appl.
  Sci., 17 (2007), pp.~1937--1963.

\bibitem{LMX:2025}
{\sc X.~Liu, T.~Mao, and J.~Xu}, {\em Integral representations of {S}obolev
  spaces via {R}e{LU}$^k$ activation function and optimal error estimates for
  linearized networks}, arXiv preprint arXiv:2505.00351,  (2025).

\bibitem{Martinet:1970}
{\sc B.~Martinet}, {\em R\'egularisation d'in\'equations variationnelles par
  approximations successives}, Rev. Fran\c caise Informat. Recherche
  Op\'erationnelle, 4 (1970), pp.~154--158.

\bibitem{Nesterov:2021}
{\sc Y.~Nesterov}, {\em Inexact high-order proximal-point methods with
  auxiliary search procedure}, SIAM J. Optim., 31 (2021), pp.~2807--2828.

\bibitem{Nesterov:2022}
{\sc Y.~Nesterov}, {\em Inexact basic tensor methods for some classes of convex
  optimization problems}, Optim. Methods Softw., 37 (2022), pp.~878--906.

\bibitem{Nesterov:2023}
{\sc Y.~Nesterov}, {\em Inexact accelerated high-order proximal-point methods},
  Math. Program., 197 (2023), pp.~1--26.

\bibitem{PR:2012}
{\sc H.~Pan and H.~Rui}, {\em Mixed element method for two-dimensional
  {D}arcy--{F}orchheimer model}, J. Sci. Comput., 52 (2012), pp.~563--587.

\bibitem{Park:DD29}
{\sc J.~Park}, {\em Characterizations of the augmented {L}agrangian method for
  convex variational problems}.
\newblock To appear in Domain Decomposition Methods in Science and Engineering
  XXIX.

\bibitem{Park:2022b}
{\sc J.~Park}, {\em Fast gradient methods for uniformly convex and weakly
  smooth problems}, Adv. Comput. Math., 48 (2022), p.~Paper No.~34.

\bibitem{Park:2025}
{\sc J.~Park}, {\em Unified analysis of saddle point problems via auxiliary
  space theory}, arXiv preprint arXiv:2509.11434,  (2025).

\bibitem{PH:2025}
{\sc J.~Park and S.~M. Hassanizadeh}, {\em Parallel multilevel methods for
  solving the {D}arcy--{F}orchheimer model based on a nearly semicoercive
  formulation}, arXiv preprint arXiv:2507.03192,  (2025).

\bibitem{PXX:2025}
{\sc J.~Park, J.~Xu, and X.~Xu}, {\em A neuron-wise subspace correction method
  for the finite neuron method}, J. Comput. Math., 43 (2025), pp.~1488--1511.

\bibitem{Powell:1969}
{\sc M.~J.~D. Powell}, {\em A method for nonlinear constraints in minimization
  problems}, in Optimization ({S}ympos., {U}niv. {K}eele, {K}eele, 1968),
  Academic Press, London-New York, 1969, pp.~283--298.

\bibitem{RPK:2019}
{\sc M.~Raissi, P.~Perdikaris, and G.~E. Karniadakis}, {\em Physics-informed
  neural networks: {A} deep learning framework for solving forward and inverse
  problems involving nonlinear partial differential equations}, J. Comput.
  Phys., 378 (2019), pp.~686--707.

\bibitem{Rockafellar:1970}
{\sc R.~T. Rockafellar}, {\em Convex Analysis}, Princeton University Press,
  Princeton, NJ, 1970.

\bibitem{Rockafellar:1976b}
{\sc R.~T. Rockafellar}, {\em Augmented {L}agrangians and applications of the
  proximal point algorithm in convex programming}, Math. Oper. Res., 1 (1976),
  pp.~97--116.

\bibitem{Rockafellar:1976a}
{\sc R.~T. Rockafellar}, {\em Monotone operators and the proximal point
  algorithm}, SIAM J. Control Optim., 14 (1976), pp.~877--898.

\bibitem{Rockafellar:2023}
{\sc R.~T. Rockafellar}, {\em Convergence of augmented {L}agrangian methods in
  extensions beyond nonlinear programming}, Math. Program., 199 (2023),
  pp.~375--420.

\bibitem{RVW:1996}
{\sc T.~Rusten, P.~Vassilevski, and R.~Winther}, {\em Interior penalty
  preconditioners for mixed finite element approximations of elliptic
  problems}, Math. Comp., 65 (1996), pp.~447--466.

\bibitem{SMH:2023}
{\sc M.~Sedghi-Asl, E.~Morales-Casique, and S.~M. Hassanizadeh}, {\em
  Dispersion in high-porosity porous medium}, J. Porous Media, 26 (2023),
  pp.~1--12.

\bibitem{SHJHX:2023}
{\sc J.~W. Siegel, Q.~Hong, X.~Jin, W.~Hao, and J.~Xu}, {\em Greedy training
  algorithms for neural networks and applications to {PDE}s}, J. Comput. Phys.,
  484 (2023), p.~Paper No.~112084.

\bibitem{SS:2022}
{\sc N.~Sukumar and A.~Srivastava}, {\em Exact imposition of boundary
  conditions with distance functions in physics-informed deep neural networks},
  Comput. Methods Appl. Mech. Engrg., 389 (2022), p.~Paper No.~114333.

\bibitem{TX:2002}
{\sc X.-C. Tai and J.~Xu}, {\em Global and uniform convergence of subspace
  correction methods for some convex optimization problems}, Math. Comp., 71
  (2002), pp.~105--124.

\bibitem{Teboulle:2018}
{\sc M.~Teboulle}, {\em A simplified view of first order methods for
  optimization}, Math. Program., 170 (2018), pp.~67--96.

\bibitem{WMIK:2023}
{\sc J.~Wang, Y.~Mo, B.~Izzuddin, and C.-W. Kim}, {\em Exact {D}irichlet
  boundary physics-informed neural network {EPINN} for solid mechanics},
  Comput. Methods Appl. Mech. Engrg., 414 (2023), p.~Paper No.~116184.

\bibitem{WZ:2014}
{\sc J.~Wu and H.~Zheng}, {\em Parallel subspace correction methods for nearly
  singular systems}, J. Comput. Appl. Math., 271 (2014), pp.~180--194.

\bibitem{Xu:1992}
{\sc J.~Xu}, {\em Iterative methods by space decomposition and subspace
  correction}, SIAM Rev., 34 (1992), pp.~581--613.

\bibitem{Xu:2010}
{\sc J.~Xu}, {\em Fast {P}oisson-based solvers for linear and nonlinear
  {PDE}s}, in Proceedings of the {I}nternational {C}ongress of
  {M}athematicians. {V}olume {IV}, Hindustan Book Agency, New Delhi, 2010,
  pp.~2886--2912.

\bibitem{Xu:2020}
{\sc J.~Xu}, {\em Finite neuron method and convergence analysis}, Commun.
  Comput. Phys., 28 (2020), pp.~1707--1745.

\bibitem{XR:1991}
{\sc Z.-B. Xu and G.~F. Roach}, {\em Characteristic inequalities of uniformly
  convex and uniformly smooth {B}anach spaces}, J. Math. Anal. Appl., 157
  (1991), pp.~189--210.

\bibitem{YZ:2025}
{\sc H.~Yu and S.~Zhang}, {\em A natural deep {R}itz method for essential
  boundary value problems}, J. Comput. Phys., 537 (2025), p.~Paper No.~114133.

\bibitem{Zalinescu:2002}
{\sc C.~Z\u{a}linescu}, {\em Convex Analysis in General Vector Spaces}, World
  Scientific Publishing Co., Inc., River Edge, NJ, 2002.

\end{thebibliography}

\end{document}